\theoremstyle{plain}
\newtheorem{thm}{Theorem}[section] 
\newtheorem{lemma}[thm]{Lemma}
\newtheorem{cor}[thm]{Corollary}
\newtheorem{prop}[thm]{Proposition}
\theoremstyle{definition}
\newtheorem{defn}[thm]{Definition}
\newtheorem{notn}[thm]{Notation}
\newtheorem{rem}[thm]{Remark}
\newtheorem{ex}[thm]{Example}
\newtheorem{question}[thm]{Question}
\newtheorem{problem}[thm]{Problem}
\newcommand{\cal}[1]{\mathcal{#1}}
\newcommand{\calG}{\cal{MCG}}
\newcommand{\N}{\cal{N}}
\newcommand{\h}[1]{\widehat{#1}}
\newcommand{\Z}{\mathbb{Z}}
\newcommand{\R}{\mathbb{R}}
\newcommand{\frakS}{\mathfrak{S}}
\newcommand{\C}{\cal C}
\newcommand{\Bigdom}[1]{\operatorname{Big}\left(#1\right)}
\newcommand{\diam}{\mathrm{diam}}
\newcommand{\dist}{\mathrm{d}}
\newcommand{\nest}{\sqsubseteq}
\newcommand{\propnest}{\sqsubsetneq}
\newcommand{\orth}{\bot}
\newcommand{\trans}{\pitchfork}
\newcommand{\ofS}{\overline{\frakS}}
\newcommand{\wfS}{\widetilde{\frakS}}
\renewcommand{\ll }{\langle\hspace{-.7mm}\langle }
\newcommand{\rr }{\rangle\hspace{-.7mm}\rangle }
\title[Bounded cohomology, quotient extensions, and HHG]{Bounded cohomology, quotient extensions,\\and hierarchical hyperbolicity}
\author[F. Fournier-Facio]{Francesco Fournier-Facio}
    \address{(Francesco Fournier-Facio) Department of Pure Mathematics and Mathematical Statistics, University of Cambridge, Cambridge, UK}
    \email{ff373@cam.ac.uk}
\author[G. Mangioni]{Giorgio Mangioni}
    \address{(Giorgio Mangioni) Maxwell Institute and Department of Mathematics, Heriot-Watt University, Edinburgh, UK}
    \email{gm2070@hw.ac.uk}
\author[A. Sisto]{Alessandro Sisto}
    \address{(Alessandro Sisto) Maxwell Institute and Department of Mathematics, Heriot-Watt University, Edinburgh, UK}
    \email{a.sisto@hw.ac.uk}
\begin{document}
\begin{abstract}
    We call a central extension bounded if its Euler class is represented by a bounded cocycle. We prove that a bounded central extension of a hierarchically hyperbolic group (HHG) is still a HHG; conversely if a central extension is a HHG, then the extension is bounded, and under a further mild assumption the quotient is commensurable to a HHG. Motivated by questions on hierarchical hyperbolicity of quotients of mapping class groups, we therefore consider the general problem of determining when a quotient of a bounded central extension is still bounded, which we prove to be equivalent to an extendability problem for quasihomomorphisms. Finally, we show that quotients of the 4-strands braid group by suitable powers of a pseudo-Anosov are HHG, and in fact bounded central extensions of some HHG. We also speculate on how to extend the previous result to all mapping class groups.
\end{abstract}
\maketitle


\section{Introduction}
Central extensions $1\to K\to E\to G\to 1$ (which we often abbreviate with just $E$) are classified by their associated Euler class $\alpha_E\in H^2(G; K)$. We call a central extension \emph{bounded} if its Euler class is bounded, i.e. it is represented by a bounded cocycle. One reason of interest is the fact that bounded central extensions are quasi-isometrically trivial, meaning that $E$ is quasi-isometric to the direct product $K\times G$ \cite{Gersten_B_is_QIT}. Another reason of interest comes from the study of hierarchically hyperbolic groups, see below.

In this paper we always consider central extensions with finitely generated kernel. We are interested in the following natural problem:

\begin{problem}
\label{prob}
    Given a bounded central extension $E$, which of its quotient central extensions $\bar E$ are bounded?
\end{problem}

By a quotient central extension we mean that there exists a diagram as follows:
$$\begin{tikzcd}
    1\ar{r}&K \ar{r}\ar[d,"\cong"]&E\ar{r}\ar[d]&G\ar{r}\ar[d]&1.\\
    1\ar{r}& K\ar{r}&\bar E\ar{r}&\bar G\ar{r}&1.
\end{tikzcd}$$
Equivalently, $\bar E = E/N$, where $N$ is a normal subgroup of $E$ intersecting $K$ trivially.

We note in the other direction that if a central extension has a bounded quotient central extension, then it is bounded (Lemma \ref{qce pullback}). However, not all quotient extensions of a bounded central extension are themselves bounded. For instance, the Heisenberg group is a quotient central extension of $F_2\times \mathbb Z$ which is not bounded (see Example \ref{Heisenberg}); in fact, any central extension can be realised as a quotient of a trivial extension of a free group (see Example \ref{ex:all_ext_are_quotient}).

By Proposition \ref{bounded via qm:abelian_ker} (which was recently obtained independently by Tao and Wan \cite[Lemma 5.5]{tao2025properactionsfiniteproducts}), boundedness of a central extension $E$ is equivalent to the existence of a \emph{quasihomomorphism} $E\to K$ that is the identity on $K$. Quasihomomorphisms, in the sense of e.g. \cite{fujikapo}, are also relevant for quotient central extensions, and indeed we show that boundedness of a quotient central extension is equivalent to an extendability problem for a quasihomomorphism. An informal statement is as follows:

\begin{prop}[see Proposition \ref{bounded iff extendable}]
    \label{propintro:bounded-iff-ext}
    Consider a bounded central extension
    $$\begin{tikzcd}
    1\ar{r}& K \ar{r}&E\ar[r,"\pi"]&G\ar{r}&1.
\end{tikzcd}$$
with finitely generated kernel, and a quotient central extension $\bar E=E/N$. Then $\bar E$ is bounded if and only if a certain quasihomomorphism on $\pi(N)$ extends to $G$.
\end{prop}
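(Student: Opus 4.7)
My plan is to apply Proposition~\ref{bounded via qm:abelian_ker} at both ends, which characterises boundedness of a central extension with kernel $K$ by the existence of a quasihomomorphism from the total group to $K$ restricting to the identity on $K$. Let $\rho \colon E \to K$ be such a quasihomomorphism, provided by boundedness of $E$. Since $N \cap K = \{1\}$, the restriction $\pi|_N \colon N \to \pi(N)$ is a group isomorphism, so the formula $\tau(\pi(n)) \coloneqq \rho(n)$ unambiguously defines a quasihomomorphism $\tau \colon \pi(N) \to K$. This is the ``certain quasihomomorphism'' appearing in the statement.

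For the forward direction, suppose $\bar E$ is bounded and take a quasihomomorphism $\bar\rho \colon \bar E \to K$ that is the identity on $K$. Pulling back along $E \to \bar E$ yields a quasihomomorphism $\tilde\rho \colon E \to K$ that is the identity on $K$ and is bounded on $N$ (in fact constant with value $\bar\rho(1_{\bar E})$, which is bounded by the defect). The difference $\rho - \tilde\rho$ is then a quasihomomorphism $E \to K$ vanishing on $K$, and hence, via any set-theoretic section $G \to E$, induces a quasihomomorphism $\sigma \colon G \to K$. By construction $\sigma|_{\pi(N)}$ agrees with $\tau$ up to a bounded error, so $\sigma$ is the desired extension of $\tau$.

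For the converse, suppose $\tau$ extends to a quasihomomorphism $\sigma \colon G \to K$, and define $\tilde\rho \coloneqq \rho - \sigma \circ \pi \colon E \to K$. This is a quasihomomorphism equal to the identity on $K$ (since $\sigma \circ \pi$ vanishes on $K$) and bounded on $N$ (because for $n \in N$ one has $\tilde\rho(n) = \tau(\pi(n)) - \sigma(\pi(n))$, which is bounded by the hypothesis that $\sigma$ extends $\tau$). Choosing a lift of each element of $\bar E$ to $E$ and composing with $\tilde\rho$ produces a quasihomomorphism $\bar\rho \colon \bar E \to K$ equal to the identity on $K$, so Proposition~\ref{bounded via qm:abelian_ker} implies $\bar E$ is bounded.

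Rather than a single decisive obstacle, the main technical burden is making sure that each descent or extension operation (from $E$ to $G$ via a section, from $E$ to $\bar E$ via lifts, and the equality of quasihomomorphisms up to bounded error) preserves the quasihomomorphism property with controlled defect. The crux of the argument is the identification $N \cong \pi(N)$ afforded by $N \cap K = \{1\}$, which both makes $\tau$ well defined and pinpoints it as the exact obstruction to boundedness of $\bar E$.
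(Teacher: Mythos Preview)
Your proposal is correct and follows essentially the same approach as the paper's proof of Proposition~\ref{bounded iff extendable}: in both directions one forms the difference of two quasihomomorphisms on $E$ (one coming from boundedness of $E$, the other pulled back from $\bar E$ or $G$) so that it vanishes, respectively is bounded, on the relevant kernel, and then descends. The only cosmetic difference is that where you descend directly via a set-theoretic section (using that the quasihomomorphism is zero on $K$, respectively bounded on $N$), the paper packages this step as Lemma~\ref{induced qm}; your argument that the composition with a section is again a quasihomomorphism is valid and slightly more direct.
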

This has further connection to bounded cohomology in degree 3, which we discuss in Section \ref{sec:background}. 

The second and third authors encountered Problem~\ref{prob} while studying quotients of mapping class groups, in particular relating to their conjecture on hierarchical hyperbolicity of quotients of mapping class groups \cite[Question 3]{MS_rigidity_MCG}. Hierarchically hyperbolic groups (HHG), as first defined by Behrstock, Hagen, and Sisto in \cite{HHS_I}, provide a common framework for, among others, mapping class groups of surfaces, most cubulated groups, fundamental groups of three-manifolds, and extra-large Artin groups \cite{hagensusse, HRSS_3manifold,ELTAG}, and are therefore amenable to tools from both low-dimensional topology and the world of CAT(0) cube complexes. Showing that a given group is hierarchically hyperbolic yields a lot of information about it (\cite{quasiflat,HHP:coarse,ANS:UEG,HHL:proximal,DMS:stable,FarrellJones} is a highly non-exhaustive list). Hence it is natural to explore which group-theoretic procedures the class of HHG is closed under. These include taking graph products \cite{graphprod}, relative hyperbolicity \cite{HHS_II}, many graphs of groups \cite{berlairobbio}, and several quotients \cite{BHMS, short_HHG:II,random_quot}. In this direction, we characterise which central extensions preserve hierarchical hyperbolicity:

\begin{thm}[see Theorem~\ref{thm:HHG_iff_bounded_abelian_ker}]
\label{thmintro:HHG_iff_bounded}
Let $1\to K\to E\to G\to 1$ be a central extension with finitely generated kernel, and suppose that $G$ is a hierarchically hyperbolic group. Then $E$ is a hierarchically hyperbolic group if and only if the extension is bounded.
\end{thm}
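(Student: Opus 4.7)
The plan is to handle the two implications separately, connecting both to Proposition \ref{bounded via qm:abelian_ker}, which characterises boundedness of the extension as the existence of a quasihomomorphism $q : E \to K$ restricting to the identity on $K$.

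For the implication bounded $\Rightarrow$ HHG, I would construct an explicit HHG structure on $E$ by combining the given HHG structure $\frakS_G$ on $G$ with a natural HHG structure $\frakS_K$ on the finitely generated abelian group $K$. Concretely, take the index set $\frakS_E := \frakS_G \sqcup \frakS_K$, declare every pair from opposite factors orthogonal, and retain the nesting/orthogonality/transversality relations within each factor. Pull back coordinate projections $\pi_V$ for $V\in\frakS_G$ along the surjection $E \to G$, and pull back the projections to $W \in \frakS_K$ along $q$. The fact that $q$ is a homomorphism only up to bounded error is exactly what is needed for the resulting maps to be coarsely $E$-equivariant, and the HHG axioms (consistency, uniqueness, bounded geodesic image, etc.) transfer from the two factors with only the bounded defect of $q$ entering into the estimates.

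For the converse HHG $\Rightarrow$ bounded, the target is again to produce a quasihomomorphism $q : E \to K$ which is the identity on $K$; by Proposition \ref{bounded via qm:abelian_ker}, such a $q$ suffices. First I would show that $K$ is undistorted in $E$, invoking general structural results on abelian subgroups of HHGs (centrality of $K$ should force this). Then I would locate inside $\frakS_E$ an ``orthogonal slice'' $\frakS^K \subseteq \frakS_E$ whose associated standard product region coarsely coincides with $K$, using centrality to rule out nesting or transversality between $K$-sensitive domains and the rest of $\frakS_E$. Finally, the coordinate projection $E \to K$ obtained from $\frakS^K$ via the HHG distance formula should be the desired quasihomomorphism, with defect controlled by the consistency and bounded geodesic image constants of the HHG structure.

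The main obstacle is the reverse direction, specifically the extraction of the orthogonal slice $\frakS^K$ and the verification that the associated projection is a genuine quasihomomorphism with uniformly bounded defect. Identifying the slice requires exploiting the centrality of $K$ to characterise the $K$-sensitive domains as a $\orth$-closed subset of $\frakS_E$; controlling the defect ultimately reduces to bounding how much an element of $E$ can ``shear'' the coordinates in $\frakS_E \setminus \frakS^K$ when translated by an element of $K$, which is where the hierarchical machinery does the real work.
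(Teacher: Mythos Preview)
Your forward direction (bounded $\Rightarrow$ HHG) is essentially the content of the references the paper cites (\cite{ABO} and \cite{uniform_undistortion}), and your description of the combined structure $\frakS_G \sqcup \frakS_K$ with cross-orthogonality is the right picture. The paper chooses to induct on the rank of $K$ via Corollary~\ref{cor:decomposing:bounded:extensions} rather than treat all of $K$ at once, but this is a matter of packaging, not substance.

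The reverse direction, however, has a genuine gap. You propose to extract a quasihomomorphism $E \to K$ as ``the coordinate projection obtained from $\frakS^K$ via the HHG distance formula''. But coordinate projections $\pi_U$ in a HHG structure are merely coarsely Lipschitz and coarsely equivariant; there is no mechanism by which the distance formula, or a gate map to a product region, produces a \emph{quasihomomorphism}. A map $q\colon E \to K$ satisfying $q(e_1 e_2) \sim q(e_1) + q(e_2)$ is a much stronger object than a coarse retraction, and the HHG axioms (consistency, BGI) control distances, not additive defects. Concretely: even if you correctly isolate the domains on which $K$ has unbounded orbits, the projection of $e_1 e_2$ to such a domain has no a priori relationship to the projections of $e_1$ and $e_2$ separately.

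The paper's approach supplies exactly the missing idea. For each infinite-order $z \in K$, one shows that every domain $U \in \Bigdom{z}$ is a \emph{quasiline}, using centrality of $z$ to force all of (a finite-index subgroup of) $E$ to share the fixed points of $z$ on $\partial \C U$. Once $\C U$ is a quasiline with $E'$ fixing its ends, the \emph{Busemann quasimorphism} (Example~\ref{ex:busemann}) gives an honest quasimorphism $E' \to \R$, unbounded on $\langle z \rangle$; this is then promoted to a quasimorphism on all of $E$ via \cite[Lemma~3.4]{short_HHG:I}. Finally, doing this for a basis $z_1,\ldots,z_n$ of the free part of $K$ yields quasimorphisms $\phi_1,\ldots,\phi_n$ with invertible (lower-triangular) restriction matrix on $\Z^n$, and inverting this matrix produces the required $\Psi\colon E \to K$. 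The quasiline-plus-Busemann step is the heart of the argument, and your sketch does not contain it.
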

We postpone to Section~\ref{sec:quot_MCG} the details of how Theorem~\ref{thmintro:HHG_iff_bounded} relates to hierarchical hyperbolicity of quotients of mapping class groups. Actually, when we prove that a central extension with $E$ a HHG is bounded, we use no assumption on $G$:
\begin{thm}[see Theorem~\ref{thm:centralquot_is_HHS}]\label{thm:centralext_is_bdd:intro}
    Let $1\to K\to E\to G\to 1$ be a central extension with finitely generated kernel. If $E$ is a hierarchically hyperbolic group, then the extension is bounded. 
\end{thm}

Under the further mild assumption that $E$ has clean containers (see Definition~\ref{def:CC}), we prove that a central quotient of a HHG is commensurable to a HHG:

\begin{thm}[see Theorem~\ref{thm:centralquot_is_HHS}]\label{thm:centralquot_is_HHS:intro}
    Let $1\to K\to E\to G\to 1$ be a central extension with finitely generated kernel. If $E$ has a HHG structure with clean containers, there exists a finite-index subgroup $E'\le E$ containing $K$ such that $E'/K$ is a hierarchically hyperbolic group.
\end{thm}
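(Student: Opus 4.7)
The proof splits naturally into the two claims, which call for rather different approaches.

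For the boundedness of the extension, the plan is to invoke Proposition~\ref{bounded via qm:abelian_ker} and construct a quasihomomorphism $q\colon E \to K$ restricting to the identity on $K$. Since $K$ is finitely generated abelian, after decomposing $K$ virtually as $\mathbb{Z}^n$ plus torsion, it suffices to build, for each generator of the free part, a real-valued quasimorphism on $E$ with the correct restriction. The key structural observation from the HHG hypothesis is that for each $U \in \frakS$ the action $E \curvearrowright \mathcal{C}(U)$ is acylindrical, and a central subgroup of a group acting acylindrically on a hyperbolic space necessarily has bounded orbits; hence $K$ acts elliptically on every $\mathcal{C}(U)$. Combined with the fact that abelian subgroups of HHG are undistorted, the distance formula forces the linear word length of elements of $K$ to be accounted for by how $K$ permutes the domain set $\frakS$ (or at least by contributions from many domains). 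Averaging this ``hierarchical translation'' data in an $E$-equivariant way should produce the required $q$, with defect bounded using the elliptic action on each $\mathcal{C}(U)$ and the abelianness of the target.

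For the quotient claim, with cobounded product regions, the plan is to build an explicit HHG structure on a quotient $E'/K$ for a suitable finite-index subgroup $E' \le E$ containing $K$. First, pass to $E'$ to simplify the action of $K$ on $\frakS$, for instance by arranging that $K$ stabilizes each domain in a relevant subcollection. The candidate HHG structure on $E'/K$ uses (a truncation of) the same index set $\frakS$, the same hyperbolic spaces $\mathcal{C}(U)$ — which carry well-defined $E'/K$-actions up to bounded error since $K$ acts elliptically — and projections descended from those on $E'$ via the quotient map. The coboundedness of product regions is the crucial input: it ensures that the direction corresponding to $K$ in $E$ appears as a genuine factor in a cobounded product decomposition $P_U \approx F_U \times E_U$ of the product regions, so that the distance formula on $E'$ descends to a distance formula on $E'/K$ modulo that factored-out contribution.

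The main obstacle will be the second part: verifying the HHG axioms — distance formula, uniqueness, partial realization — for the induced structure on $E'/K$. The risk is that $d_{E'/K}$ could differ from the sum of downstairs projection distances by more than the factored-out $K$-contribution, which would be possible if $K$ were not orthogonally positioned within the hierarchical combinatorics. The cobounded product regions hypothesis is precisely what rules this out, by exhibiting $K$ as a factor of a cobounded product. The first part, although still technical, should follow cleanly from the interplay of acylindricity, centrality, and the HHG distance formula, with no comparable conceptual obstruction.
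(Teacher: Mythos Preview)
Your proposal contains a fundamental error that undermines both parts. You claim that for each $U\in\frakS$ the action $E\curvearrowright \C U$ is acylindrical, and hence that the central subgroup $K$ acts elliptically on every $\C U$. Neither assertion is correct. The actions on individual coordinate spaces of a HHG are not in general acylindrical: for instance $\Z^2$ is a HHG whose structure contains quasilines on which the whole group, and in particular any central copy of $\Z$, acts with unbounded orbits. More to the point, if $K$ really acted with uniformly bounded orbits on every $\C U$, the distance formula would force any $K$-orbit in $\cal X$ to be bounded, contradicting that $K$ has infinite order elements; your suggestion that ``permuting the domain set'' could make up the missing distance does not survive contact with the actual form of the distance formula, which is a sum of projection distances over domains.

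The paper's argument goes in precisely the opposite direction. For each infinite-order $z\in K$ there is a nonempty finite set $\Bigdom{z}$ of pairwise orthogonal domains on which $\langle z\rangle$ has unbounded orbits; centrality of $z$ forces each such $\C U$ to be a quasiline on which (a finite-index subgroup of) $E$ acts without swapping the ends, and the associated Busemann quasimorphism is then unbounded on $\langle z\rangle$. Assembling these over a basis of the free part of $K$ and inverting a triangular matrix yields the quasihomomorphism $E\to K$ required by Proposition~\ref{bounded via qm:abelian_ker}. For the quotient, cobounded product regions are used first to show that $K$ fixes every \emph{unbounded} domain; one then passes to a finite-index $E'$, replaces the quasilines in $\Bigdom{z}$ via linear combinations of Busemann quasimorphisms so that $z$ has a single big domain, removes that domain, and checks the axioms for $E'/\langle z\rangle$ with the remaining coordinate spaces quotiented by the (now genuinely bounded) $\langle z\rangle$-action. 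Your outline never isolates $\Bigdom{z}$ and never produces the Busemann quasimorphisms, so it is missing the main engine of both halves of the proof.
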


\begin{rem}
    \cite[Question 6.8]{uniform_undistortion} asks if, given a group $G$ which admits an asymptotic cone with a cut-point, then every HHG $E$ which is quasi-isometric to $\Z\times G$ must contain a central element $t$ such that $E/\langle t\rangle$ is virtually a HHG. Under the assumption that $E$ has clean containers, our Theorem~\ref{thm:centralquot_is_HHS:intro} reduces the question to finding a central element.
\end{rem}

\begin{rem}
    With the notation of Theorem~\ref{thm:centralquot_is_HHS:intro}, $G$ is a finite-index overgroup of a HHG, and therefore inherits the structure of a hierarchically hyperbolic \emph{space} given by the quasi-isometric inclusion $E'/K\hookrightarrow G$. However, even assuming that $E$ has clean containers, one cannot hope that $G$ itself is a hierarchically hyperbolic \emph{group}: in Remark \ref{re-mark} we argue that the trivial central extension $E=\Z\times G$ of the $(3,3,3)$-triangle group $G$ is HHG, while $G$ is not by \cite{PS_unbounded}.
\end{rem}

\subsection*{Outline of sections and arguments}
We summarise here the techniques that are involved in our arguments. In what follows, we often restrict to central extensions with infinite cyclic kernel for the ease of exposition; in this setting, a quasihomomorphism with image in $\Z$ is simply called a \emph{quasimorphism}.

In Section \ref{sec:background} we discuss generalities on central extensions and their quotients, boundedness, and quasihomomorphisms. It is worth mentioning Lemma~\ref{qce pullback}, which states that, if a quotient extension is bounded, then so is the original extension. Next, we prove the characterisation of bounded quotient extensions, Proposition~\ref{bounded iff extendable}. The key observation is that a central extension $1 \to \Z \to E \xrightarrow{\pi} G \to 1$ is bounded if and only if it admits a quasimorphism $\phi\colon E\to \Z$ which is the identity on the kernel (see Proposition \ref{bounded via qm:abelian_ker}). The reader should think of $\phi$ as a ``coarse homomorphic'' retraction inducing a ``coarse'' splitting of the extension, as explained below.
\par\medskip
Section \ref{sec:HHG} starts with background material on hierarchically hyperbolic groups (HHG). Roughly, a group $\Gamma$ is a HHG if there exists a collection of hyperbolic spaces $\{\C U\}_{U\in \frakS}$, together with coarsely Lipschitz ``coordinate projections'' $\pi_U\colon \Gamma\to \C U$ satisfying various conditions. Each $U\in \frakS$ is called a \emph{domain}, and $\C  U$ is the associated \emph{coordinate space}.

We uncover a connection between boundedness and hierarchical hyperbolicity of a central extension. In one direction, Theorem~\ref{thm:HHG_iff_bounded_abelian_ker} shows that a bounded central extension of a HHG is itself a HHG. With an inductive argument, one can reduce to the case where the kernel is $\Z$, which was already settled by \cite[Theorem 5.14]{uniform_undistortion}; in turn, the proof of the latter is a refinement of \cite[Corollary 4.3]{HRSS_3manifold}, which solved the case where the base is hyperbolic. We sketch here the core idea of both arguments, as it enlightens several ideas that appear repeatedly throughout our paper. From a bounded extension one gets an unbounded quasimorphism $\phi\colon E\to \Z$ as above, and the function $E\to \Z\times G$ mapping $e\in E$ to $(\phi(e),\pi(e))$ is a quasi-isometry. In other words, a bounded extension is also \emph{quasi-isometrically trivial} in the sense of \cite{Gersten_B_is_QIT}, though the converse implication is not true in general (see \cite{frigsisto, ascarimilizia} and Remark \ref{rem:QITB}). Then the HHG structure for $E$ will have one domain for every domain of $G$, with projection to the same coordinate space factoring through the quotient map $\pi$, and one coordinate space quasi-isometric to $\R$ (a \emph{quasiline}) to ``detect'' the $\Z$-factor. Such quasiline is built out of the quasimorphism $\phi$, using an observation of Abbott, Balasubramanya, and Osin \cite[Lemma 4.15]{ABO}.
\par\medskip
In the opposite direction, Theorem~\ref{thm:centralquot_is_HHS} proves that, if $E$ is a HHG, then the central extension is bounded, moreover, under the further technical assumption of clean containers, the quotient is a finite-index overgroup of a HHG. To get boundedness we again look for a quasimorphism on $E$ which is the identity on $\Z$. Towards this, one first finds a coordinate space $\C U$ on which the centre acts loxodromically, which must be a quasiline; then the required quasimorphism is the \emph{Busemann quasimorphism} \cite[Section 4.1]{Manning_actions_on_hyp} associated to the action of (a finite-index subgroup of) $E$ on $\C U$, which roughly maps each $e\in E$ to its asymptotic translation length on the quasiline. 

We now sketch how to prove hierarchical hyperbolicity of a finite-index subgroup of $G$. Let $\C U_1,\ldots,\C U_k$ be the coordinate spaces on which $\Z$ acts loxodromically. By taking linear combinations of the associated Busemann quasimorphisms, and then replacing each $\C U_i$ with a new quasiline (again using \cite[Lemma 4.15]{ABO}), we can modify the HHG structure for $E$ in such a way that $\Z$ acts loxodromically on a single $\C U_i$, call it $\C U$, and with uniformly bounded orbits on every other coordinate space. Then the HHG structure of the quotient roughly coincides with what is left of the structure for $E$ after we delete $\C U$. 

In the above procedure, one has to restrict to a finite-index subgroup of $E$. More precisely, the $E$-action on $\frakS$ permutes the collection $\mathcal U=\{U_1,\ldots, U_k\}$, and the finite-index subgroup one considers is the kernel of the action on $\mathcal U$. Restricting to a subgroup is unavoidable, as in Remark~\ref{re-mark} we show that, while the $(3,3,3)$ triangle group $G$ is not a HHG by \cite{PS_unbounded}, the direct product $\Z\times G$ is. In this sense, the statement of Theorem~\ref{thm:centralquot_is_HHS:intro} is optimal.

\par\medskip
In Section \ref{sec:quot_MCG} we clarify how the study of hierarchical hyperbolicity of quotients of mapping class groups leads to questions on the boundedness of certain central extensions (Questions \ref{quest:mcg/dt_interior} and \ref{quest:quot_by_pA}), and discuss possible strategies to tackle them.

    The first version of this paper contained a proof of hierarchical hyperbolicity of quotients of the \emph{braid group} $B_4$ on four strands by a suitable power of a pseudo-Anosov (from now on, a \emph{pseudo-Anosov quotient}). This fitted our framework because the mapping class group of a surface with boundary is a central extension of (a finite-index subgroup of) the mapping class group of a surface without boundaries, whose pseudo-Anosov quotients were already studied in e.g. \cite{DGO,hhs_asdim}. The key idea was that, if a further quotient extension has hyperbolic base, then it is bounded by \cite{neumannreeves}, and therefore the original quotient extension is bounded by Lemma~\ref{qce pullback}. Thus the problem reduced to construct a suitable hyperbolic quotient of the base of the extension, which involved the machinery of \emph{rotating families} from \cite{dahmani:rotating} and of \emph{short HHG} from \cite{short_HHG:I}. This approach could possibly be adapted to higher-genus surfaces with boundary, conditionally to the residual finiteness of certain hyperbolic groups (see Remark~\ref{rem:enough_hyp_rf}).
    
    Since then, Tao proved that pseudo-Anosov quotients of \emph{any} mapping class group are hierarchically hyperbolic \cite[Corollary 1.7]{tao2025extensiontheoremquasimorphisms}. Tao's argument uses our Proposition~\ref{propintro:bounded-iff-ext} and Theorem~\ref{thmintro:HHG_iff_bounded} to reduce the problem to the extendability of a certain quasimorphism; then he proves a theorem which unifies most known extendability results for quasimorphisms. We still decided to include a streamline of our original arguments as a proof of concept, see Theorem~\ref{thm:braids}.

\subsection*{Acknowledgments} We would like to thank Federica Bertolotti, Roberto Frigerio, Shuhei Maruyama, Francesco Milizia, Davide Spriano, and Bingxue Tao for helpful discussions. We are also grateful to Mark Hagen for explaining to us how to cubulate suitable extensions of crystallographic groups, and to the anonymous referee for suggesting many improvements to the exposition. FFF is supported by the Herchel Smith Postdoctoral Fellowship Fund. GM is funded by an EPSRC-DTP PhD studentship.

\section{Bounded extensions and quasi(homo)morphisms}\label{sec:background}
In this section we discuss generalities on central extensions and bounded cohomology. In particular, we state a characterisation of boundedness in terms of quasimorphisms (Proposition \ref{bounded via qm:abelian_ker}). The main result in this section is Proposition \ref{bounded iff extendable}, which gives equivalent characterisations for a quotient central extension to be bounded.

\begin{notn}
    Throughout this section, we use $\sim$ to denote equality \emph{up to a uniformly bounded error}.
\end{notn}

\begin{defn}
    In this paper, \emph{cocycle} will always refer to \emph{inhomogeneous $2$-cocycle}, namely a map $\omega \colon G^2 \to K$ such that
    \[\delta\omega(g_1, g_2, g_3) = \omega(g_2, g_3) - \omega(g_1g_2, g_3) + \omega(g_1, g_2g_3) - \omega(g_1, g_2) = 0\]
    for all $g_1, g_2, g_3 \in G$.
\end{defn}

Let us briefly recall the dictionary between second cohomology and central extensions, referring the reader to \cite[Chapter IV.3]{brown} for details. Given a central extension $1\to K\to E\to G\to 1$, we pick a section $\sigma \colon G \to E$ that is \emph{normalised}, meaning that $\sigma(1) = 1$. We identify $\omega(g, h) = \sigma(g)\sigma(h)\sigma(gh)^{-1}$ with an element of $K$, and then $\omega \colon G^2 \to K$ is a cocycle that is \emph{normalised}, meaning that $\omega(1, g) = \omega(g, 1) = 0$ for all $g \in G$. This defines a class $[\omega] \in H^2(G; K)$, independent of the choice of $\sigma$, which is called the \emph{Euler class} of the extension and we also denote by $[E]$. Conversely, given a normalised cocycle $\omega \colon G^2 \to K$ we define a group $E$ with underlying set $K \times G$ and product
\[(k_1, g_1) \cdot (k_2, g_2) = (k_1 + k_2 + \omega(g_1, g_2), g_1 g_2).\]
Then $E$ is a central extension as above, and the map $g \to (0, g)$ is a normalised section. Equality in cohomology corresponds to equivalence of central extensions.

\begin{defn}
\label{def:bddce}
    Let $(K, \| \cdot \|)$ be an Abelian group endowed with a norm. For another group $G$, a class $\alpha \in H^2(G; K)$ is \emph{bounded} if it belongs to the image of the comparison map $H^2_b(G; K) \to H^2(G; K)$. More explicitly, $\alpha$ is bounded if there exists a cocycle $\omega : G^2 \to K$ such that $\alpha = [\omega]$, and such that $\|\omega(\cdot,\cdot)\|\colon G^2\to \Z$ is uniformly bounded.
    A central extension $1 \to K \to E \to G \to 1$ is \emph{bounded} if the corresponding Euler class $[E] \in H^2(G; K)$ is bounded.
\end{defn}

\begin{defn}
\label{def:qm}
    Let $(K, \| \cdot \|)$ be as above, and let $E$ be another group. A map $\chi\colon E\to K$ is a \emph{quasihomomorphism} if there exists $D(\chi)\ge 0$, called the \emph{defect} of $\chi$, such that, for every $e_1,e_2 \in E$, 
    $$\|\chi(e_1)+\chi(e_2)-\chi(e_1e_2)\|\le D(\chi).$$
    A quasihomomorphism is \emph{homogeneous} if it restricts to a homomorphism on every cyclic subgroup. When $K=\Z$ or $\R$ with the Euclidean norm, we say \emph{quasimorphism} instead of quasihomomorphism.
\end{defn}

\begin{rem}[Homogeneisation]\label{ref:plasmon}
    Given a quasimorphism $\phi\colon E\to \R$, for every $g\in E$ the limit $\phi_h(g)=\lim_{n\to \infty}\frac{\phi(g^n)}{n}$ always exists. The map $\phi_h\colon E\to \R$ is a homogeneous quasimorphism; moreover, both $\|\phi-\phi_h\|$ and the defect of $\phi_h$ are bounded in terms of the defect of $\phi$ \cite[Lemma 2.21]{calegari}.
\end{rem}

Real-valued quasimorphisms will only appear in the course of proofs about quasimorphisms with values in $\Z$. All of our statements will only involve the case in which $K$ is a finitely generated Abelian group endowed with a word norm. In this case, a subset of $K$ is bounded if and only if it is finite, so the notions above are independent of the choice of a norm. This allows to generalise the notion of quasihomomorphism to maps taking values in any discrete group, following Fujiwara--Kapovich \cite{fujikapo}.

\begin{defn}
\label{defn:qhm:noncommutative}
    A map $\chi\colon G\to H$ between groups is a \emph{quasihomomorphism} if there exist a finite set $F\subset H$ such that, for every $g_1,g_2\in G$, the difference between $\chi(g_1g_2)$ and $\chi(g_1)\chi(g_2)$ lies in $F$ (the way in which one takes the difference does not matter by \cite[Proposition 2.3]{heuer}).
\end{defn}

We record here an example of a quasimorphism that we shall use repeatedly throughout the paper.
\begin{ex}[{Busemann quasimorphism, see e.g. \cite[Section 4.1]{Manning_actions_on_hyp}}]\label{ex:busemann}
    Let $E$ be a group acting on a $\delta$-hyperbolic metric space $X$, with Gromov boundary $\partial X$, and suppose the action fixes an ideal point $p\in \partial X$. Given a sequence $\{x_n\}\in X$ converging to $p$, the map $\phi_{\{x\}}\colon E\to \R$ defined by
    $$\phi_{\{x\}}(g)=\limsup_{n\to \infty} \dist(gx_0,x_n)-\dist(x_0,x_n),$$
    is a quasimorphism of defect at most $16\delta$. Then the Busemann quasimorphism $\phi$ is the homogeneisation of $\phi_{\{x\}}$, as in Remark~\ref{ref:plasmon}. One can check that $\phi$ does not depend on the choice of the sequence; moreover, an element $g\in E$ acts loxodromically on $X$ if and only if $\phi(g)\neq 0$. Note that, by construction, for every $g\in E$ one has that $|\phi(g)|\le \dist(x_0,gx_0)$; conversely, if $X$ is a \emph{quasiline} (i.e. if $X$ is quasi-isometric to $\R$), then there exists a constant $L$ such that $\dist(x_0,gx_0)\le |\dist(gx_0,x_n)-\dist(x_0,x_n)|+L$, for all sufficiently large $n$ (if $\{x_0,gx_0,x_n\}=\{a,b,c\}$ and $b$ is ``between'' $a$ and $c$, one can see this by considering the distance between $b$ and a geodesic $[a,c]$). Taking the limsup, one gets that $\dist(x_0,gx_0)$ and $|\phi(g)|$ are within uniform distance.
\end{ex}

We now turn to the characterisation of when a central extension is bounded, in terms of the existence of certain quasihomomorphisms.

\begin{lemma}
\label{lem:normalisation}
    Let $1 \to K \to E \to G \to 1$ be a central extension.
    \begin{itemize}
        \item Every bounded cocycle $\omega \colon G^2 \to K$ is cohomologous to a normalised bounded cocycle.
        \item If there exists a quasihomomorphic section $\sigma \colon G \to E$, then there exists a normalised quasihomomorphic section.
    \end{itemize}
\end{lemma}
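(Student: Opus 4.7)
Both parts of the lemma reduce to a single observation: any ambiguity at the identity lives in $K$, and can be absorbed by a central correction. The first part is a cohomological statement about cocycles, while the second is a direct manipulation of the section; in both, the key point is that $K$ sits centrally in $E$.

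For the first part, I would first evaluate the cocycle identity $\delta\omega = 0$ at tuples involving the identity. Substituting $g_1 = 1$ yields $\omega(1, g_2 g_3) = \omega(1, g_2)$, so $\omega(1, \cdot)$ is constant; similarly, taking $g_3 = 1$ shows $\omega(\cdot, 1)$ is constant. Both constants agree with $c := \omega(1,1) \in K$. The constant function on $G^2$ equal to $c$ is itself a cocycle—indeed, it is the coboundary of the constant function $f \equiv c$ on $G$—and it is obviously bounded. Subtracting it from $\omega$ produces a cocycle cohomologous to $\omega$, still bounded, and normalised by construction.

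For the second part, the key observation is that $\sigma(1) \in K$, since $\pi \circ \sigma = \mathrm{id}_G$ forces $\pi(\sigma(1)) = 1_G$, so $\sigma(1) \in \ker(\pi) = K$. The natural candidate is then $\sigma'(g) := \sigma(g)\sigma(1)^{-1}$; this is clearly a section, and $\sigma'(1) = 1_E$. To verify that $\sigma'$ is still a quasihomomorphism, I would use the centrality of $\sigma(1)^{-1}$ to move it past the other factors when comparing $\sigma'(g_1)\sigma'(g_2)$ with $\sigma'(g_1 g_2)$; the resulting defect set is a translate of the defect set of $\sigma$ by the single fixed element $\sigma(1)^{-1}$, hence still finite.

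Neither step presents a real obstacle—both are essentially bookkeeping. The only point requiring care is invoking centrality in the second part, since without $\sigma(1) \in Z(E)$ the correction term $\sigma(1)^{-1}$ could not be commuted past the other factors, and the defect would not simplify to a finite translate.
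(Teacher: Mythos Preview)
Your proposal is correct. For the first bullet it is essentially identical to the paper's argument (you just plug different triples into the cocycle identity, but the content is the same: $\omega(1,\cdot)$ and $\omega(\cdot,1)$ are both constantly $\omega(1,1)$, and subtracting the coboundary of the constant function normalises $\omega$).

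For the second bullet your route differs from the paper's. You translate $\sigma$ globally by the central element $\sigma(1)^{-1}$, so the new defect set is the old one shifted by a single element of $K$; this is clean and uses the centrality hypothesis in an essential way. The paper instead leaves $\sigma$ unchanged away from the identity and only redefines $\sigma'(1)=1$, then checks by a short case analysis that the defect set grows by at most finitely many elements. Your argument is slicker and avoids cases; the paper's argument has the minor advantage that it never invokes centrality, so it would go through verbatim for an arbitrary (not necessarily central) extension.
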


\begin{proof}
    For the first bullet, we have
    \[0 = \delta \omega(g^{-1}, g, 1) = \omega(g, 1) - \omega(1, 1) + \omega(g^{-1}, g) - \omega(g^{-1}, g).\]
    Therefore $\omega(g, 1) = \omega(1, 1)$, and similarly $\omega(1, g) = \omega(1, 1)$ for all $g \in G$. Let $b \colon G \to K$ be the constant function at $\omega(1, 1)$. Then $\delta b(g, h) = \omega(1, 1)$ for all $g, h \in G$. Setting $\omega' = \omega - \delta b$, we see that $\omega'$ is a normalised bounded cocycle cohomologous to $\omega$.
    
    For the second bullet, let $\sigma'(g) = \sigma(g)$ for all $g \neq 1$, and $\sigma'(1) = 1$. We need to check that $\sigma'(g_1) \sigma'(g_2)\sigma(g_1g_2)^{-1}$ takes finitely many values over $g_1, g_2 \in G$. If $g_1, g_2, g_1g_2 \neq 1$, then this follows from $\sigma$ being a quasihomomorphism. If one of $g_1, g_2$ is equal to $1$, then the equation above is just equal to $1$. Finally, if $g_1 = g \neq 1$ and $g_2 = g^{-1}$, then the equation above is equal to
    \[\sigma(g)\sigma(g^{-1}) = \big( \sigma(g)\sigma(g^{-1}) \sigma(1)^{-1} \big) \sigma(1),\]
    which again takes finitely many values because $\sigma$ is a quasihomomorphism.
\end{proof}

\begin{prop}\label{bounded via qm:abelian_ker}
    Let $1 \to K \to E \xrightarrow{\pi} G \to 1$ be a central extension with finitely generated kernel. The following are equivalent:
    \begin{enumerate}
        \item $[E]$ is bounded, i.e. $[E]$ is represented by a (normalised) bounded cocycle.
        \item There exists a quasihomomorphism $\chi \colon E \to K$ such that $\chi|_K$ is the identity.
        \item There exists a (normalised) section $s \colon G \to E$ which is a quasihomomorphism.
    \end{enumerate}
\end{prop}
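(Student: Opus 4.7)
The plan is to prove the cycle of implications $(1)\Rightarrow(3)\Rightarrow(2)\Rightarrow(1)$, using Lemma~\ref{lem:normalisation} throughout to pass freely between normalised and non-normalised versions, so that the parenthetical ``normalised'' comes along automatically.

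For $(1)\Rightarrow(3)$, I would first apply Lemma~\ref{lem:normalisation} to represent $[E]$ by a normalised bounded cocycle $\omega$. Starting from any normalised section $\sigma_0$ of $\pi$ with associated cocycle $\omega_0$, the equality $[\omega_0]=[\omega]$ produces a $1$-cochain $b\colon G\to K$, which I may take to vanish at $1$, such that $\omega_0-\omega=\delta b$. Then $\sigma(g):=b(g)^{-1}\sigma_0(g)$ is a normalised section (using centrality of $K$ to commute $b$-terms past $\sigma_0$-terms) whose cocycle is precisely $\omega$. Since $K$ is finitely generated, the bounded set $\omega(G^2)\subset K$ is finite, so $\sigma$ is automatically a quasihomomorphism.

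For $(3)\Rightarrow(2)$, given a normalised quasihomomorphic section $s$, the natural candidate is the ``coarse retraction''
$$\chi(e)\;:=\;e\cdot s(\pi(e))^{-1}.$$
Applying $\pi$ shows $\chi$ lands in $K$, and $s(1)=1$ forces $\chi|_K=\mathrm{id}$. Writing $e=\chi(e)\,s(\pi(e))$ and using centrality of $K$ yields the exact identity
$$\chi(e_1 e_2)-\chi(e_1)-\chi(e_2)\;=\;s(\pi(e_1))\,s(\pi(e_2))\,s(\pi(e_1e_2))^{-1},$$
whose right-hand side is the cocycle of $s$ and hence takes only finitely many values.

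For $(2)\Rightarrow(1)$, I would fix any normalised section $\sigma$ with cocycle $\omega$ representing $[E]$, and set $b(g):=\chi(\sigma(g))$. Applying $\chi$ to the defining identity $\sigma(g_1)\sigma(g_2)=\omega(g_1,g_2)\sigma(g_1g_2)$ and invoking both the quasihomomorphism property of $\chi$ and $\chi|_K=\mathrm{id}$ should give $\omega\sim\delta b$ uniformly in $g_1,g_2$. Hence $\omega-\delta b$ is bounded and represents the class $[E]$, proving boundedness.

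The one step that calls for a bit of care is translating between the multiplicative structure of $E$ and the additive structure of the abelian kernel $K$: the quasihomomorphism condition is phrased multiplicatively in $E$ but, once combined with $\chi|_K=\mathrm{id}$, is read additively in $K$. This is legitimate precisely because $K$ is central, so elements of $K$ can be commuted through products in $E$ at will. Beyond that, all three conditions are straightforward encodings of the same invariant $[E]\in H^2(G;K)$, and no further ingredient is required.
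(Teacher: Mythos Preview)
Your proof is correct. The main difference from the paper is structural: the paper proves $(1)\Leftrightarrow(2)$ directly using the explicit model $E\cong K\times_\omega G$, and then handles $(1)\Leftrightarrow(3)$ by citing the literature (in particular \cite[Theorem C]{heuer} for the direction $(1)\Rightarrow(3)$). Your cycle $(1)\Rightarrow(3)\Rightarrow(2)\Rightarrow(1)$ is fully self-contained; in particular your $(1)\Rightarrow(3)$ --- adjusting an arbitrary section by the cochain witnessing the cohomology between its cocycle and the given bounded one --- replaces the external citation by a short direct computation, and your $(3)\Rightarrow(2)$ via $\chi(e)=e\,s(\pi(e))^{-1}$ is a clean construction not spelled out in the paper. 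The step $(2)\Rightarrow(1)$ is essentially identical in both treatments.
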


Shortly after this paper was uploaded, Tao and Wan independently proved Proposition~\ref{bounded via qm:abelian_ker} with roughly the same methods (see \cite[Lemma 5.5]{tao2025properactionsfiniteproducts}).

\begin{proof}
    By Lemma \ref{lem:normalisation}, we may assume that all cocycles and sections involved are normalised.   
    Up to isomorphism of central extensions, $E$ has underlying set $K \times G$, and product
    \[(k_1, g_1) \cdot (k_2, g_2) = (k_1 + k_2 + \omega(g_1, g_2), g_1g_2);\]
    where $\omega \colon G^2 \to K$ is a normalised cocycle representing the central extension.

    (1)$\Rightarrow$(2): Suppose that the extension is bounded, and choose $\omega$ to be a bounded cocycle. We set $\psi \colon E \to K$ to be the projection onto the first factor; notice that $\psi|_K$ is the identity. Then
    \[\psi((k_1, g_1) \cdot (k_2, g_2)) = k_1 + k_2 + \omega(g_1, g_2) = \psi(k_1, g_1) + \psi (k_2, g_2) + \omega(g_1, g_2).\]
    Thus the defect of $\psi$ is bounded by the norm of $\omega$, and so it is bounded, i.e. $\psi$ is a quasihomomorphism.

    (2)$\Rightarrow$(1): Suppose that there exists a quasihomomorphism $\chi \colon E \to K$ such that $\chi|_K$ is the identity. Let $b \colon G \to K$ be defined as $b(g) = \chi(0, g)$. Then
    \begin{align*}
        (\omega - \delta b)(g_1, g_2) &= \omega(g_1, g_2) + b(g_1g_2) - b(g_1) - b(g_2) \\
        &= \chi(\omega(g_1, g_2), 1_G) + \chi(0, g_1g_2) - \chi(0, g_1) - \chi(0, g_2) \\
        &\sim \chi((\omega(g_1, g_2), 1_G)\cdot(0, g_1g_2)) - \chi(0, g_1) - \chi(0, g_2) \\
        &= \chi(\omega(g_1, g_2), g_1 g_2) - \chi(0, g_1) - \chi(0, g_2) \\
        &= \chi((0, g_1) \cdot (0, g_2)) - \chi(0, g_1) - \chi(0, g_2) \sim 0.
    \end{align*}
    Therefore $\omega - \delta b$ is bounded, and it is a cocycle cohomologous to $\omega$. This shows that the extension is bounded.

    (1)$\iff$(3): This equivalence was already pointed out in \cite{frigsisto} (see the discussion after Proposition 2.3 there). If $\sigma \colon G\to E$ is a quasihomomorphic normalised section, then the cocycle $\omega(g,h)=\sigma(gh)^{-1}\sigma(g)\sigma(h)$, which represents the Euler class, is bounded. The converse implication is \cite[Theorem C]{heuer}.
\end{proof}

\begin{cor}
\label{cor:decomposing:bounded:extensions}
    Let $1 \to K \to E \xrightarrow{\pi} G \to 1$ be a bounded central extension with finitely generated kernel. Let $L < K$ be a subgroup. Then the central extensions
    \[1 \to L \to E \to E/L \to 1\]
    and
    \[1 \to K/L \to E/L \to G \to 1\]
    are also bounded.
\end{cor}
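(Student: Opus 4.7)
The plan is to handle the two extensions separately, in each case invoking Proposition~\ref{bounded via qm:abelian_ker} to translate boundedness into the existence of a suitable quasihomomorphism.

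For the second extension $1 \to K/L \to E/L \to G \to 1$, the argument is at the level of cocycles. By hypothesis and Lemma~\ref{lem:normalisation}, the Euler class $[E]$ is represented by a normalised bounded cocycle $\omega\colon G^2 \to K$. Composing with the quotient homomorphism $K \to K/L$ yields a normalised bounded cocycle $\bar\omega\colon G^2 \to K/L$, and a direct computation in the explicit model $E \cong K\times G$ with twisted multiplication shows that $\bar\omega$ represents the Euler class $[E/L]$.

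For the first extension $1 \to L \to E \to E/L \to 1$, the goal is to construct a quasihomomorphism $\rho\colon E \to L$ restricting to the identity on $L$. First I would establish a preliminary lemma: every short exact sequence of finitely generated Abelian groups $0 \to L \to K \to K/L \to 0$ is bounded. To see this, write $K/L \cong \Z^b \oplus F$ with $F$ finite, lift a free basis and cyclic generators to $K$, and extend by linearity using canonical representatives $\{0,\ldots,d_j-1\}$ for each torsion coefficient of order $d_j$. The resulting set-theoretic section $\tau\colon K/L \to K$ has defect supported on torsion wrap-around terms of the form $d_j f_j \in L$, which take finitely many values, so $\tau$ is a quasihomomorphism. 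Proposition~\ref{bounded via qm:abelian_ker} then produces a quasihomomorphism $r\colon K \to L$ with $r|_L = \mathrm{id}$.

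Now by Proposition~\ref{bounded via qm:abelian_ker} applied to the given bounded extension, there is a quasihomomorphism $\chi\colon E \to K$ with $\chi|_K = \mathrm{id}$. The composite $\rho = r \circ \chi \colon E \to L$ then restricts to the identity on $L$, since $\chi$ is the identity on $K \supseteq L$ and $r$ is the identity on $L$. For the quasihomomorphism property, the defect of $\chi$ takes finitely many values $d \in K$, whence $r(d) \in L$ takes finitely many values; expanding $r(\chi(e_1)+\chi(e_2)+d(e_1,e_2))$ using the quasihomomorphism property of $r$ then absorbs all error terms into a fixed finite subset of $L$. A final appeal to Proposition~\ref{bounded via qm:abelian_ker} finishes the proof. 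The main technical point is this composition argument, which uses crucially that both target groups are Abelian so that the error terms may be collected additively.
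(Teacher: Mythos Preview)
Your proposal is correct. The paper's own proof is terser and uses only the quasihomomorphism characterisation (item (2) of Proposition~\ref{bounded via qm:abelian_ker}) for both extensions. For the $K/L$-extension the paper composes $\chi$ with the projection $K \to K/L$ and observes that the result descends to $E/L$; this is the quasihomomorphism-side version of your cocycle pushforward, so the two arguments are essentially dual. For the $L$-extension, however, the paper simply writes ``$\chi|_L$ is the identity, which proves that the $L$-central extension is bounded.'' Read literally this is a gap: $\chi$ lands in $K$, not in $L$, so item (2) of Proposition~\ref{bounded via qm:abelian_ker} does not apply directly. What is implicitly needed is exactly the quasihomomorphic retraction $r\colon K \to L$ that you construct via your preliminary lemma on short exact sequences of finitely generated Abelian groups, followed by the composition $r\circ\chi$. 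So your argument for the first extension is not a different route so much as a filling-in of a step the paper suppresses; your version is more complete.
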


\begin{proof}
    We use the characterisation in the second and third items of Proposition \ref{bounded via qm:abelian_ker}. Let $\chi \colon E \to K$ be a quasihomomorphism such that $\chi|_K$ is the identity. Since $K$ is a finitely generated Abelian group, we can find a quasihomomorphism $\xi\colon K\to L$ which is the identity on $L$. Then the composition $\xi \circ \chi\colon E\to L$ is a quasihomomorphism restricting to the identity on $L$, which proves that the $L$-central extension is bounded. 
    
    For the second extension, let $s\colon G\to E$ be a quasihomomorphic section. Composing $s$ with the projection $E \to E/L$, we get a quasihomomorphic section $G \to E/L$, which proves that the $E/L$-central extension is bounded.
\end{proof}

\subsection{(Bounded) quotient extensions}

Let $1 \to K \to E \xrightarrow{\pi} G \to 1$ be a central extension, and let $N \le E$ be a normal subgroup that intersects $K$ trivially, so that $\pi \colon N \to \pi(N)$ is an isomorphism. Then there is a diagram
\begin{equation}
\label{diagram qce}
\begin{tikzcd}
	1 & K & E & G & 1 \\
	1 & K & {\overline{E}} & {\overline{G}} & 1
	\arrow[from=1-1, to=1-2]
	\arrow[from=1-2, to=1-3]
	\arrow["{=}"', from=1-2, to=2-2]
	\arrow["\pi", from=1-3, to=1-4]
	\arrow["p"', from=1-3, to=2-3]
	\arrow[from=1-4, to=1-5]
	\arrow["p"', from=1-4, to=2-4]
	\arrow[from=2-1, to=2-2]
	\arrow[from=2-2, to=2-3]
	\arrow[from=2-3, to=2-4]
	\arrow[from=2-4, to=2-5]
\end{tikzcd}
\end{equation}
where $\overline{E} = E/N$ and $\overline{G} = G/\pi(N)$, and both rows are central extension. We use the letter $p$ to denote both quotients $E \to \overline{E}$ and $G \to \overline{G}$ by an abuse of notation. We say that the bottom row is a \emph{quotient central extension} of the top row. The main focus of the paper is the following question (see Problem \ref{prob}):

\begin{question}
\label{question qce}
    Under which conditions is a quotient central extension bounded?
\end{question}

A first easy necessary condition is given by the following observation:

\begin{lemma}
\label{qce pullback}
    With the notation of Diagram \eqref{diagram qce}, the pullback $p^* \colon H^2(\overline{G}; K) \to H^2(G; K)$ sends $[\overline{E}]$ to $[E]$. In particular, if $[\overline{E}]$ is bounded, then $[E]$ is bounded.
\end{lemma}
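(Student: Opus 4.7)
The plan is to compare the Euler cocycles of the two extensions by making compatible choices of normalised sections, and then observe that boundedness is preserved by pullback essentially for free.

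First, I would fix a normalised set-theoretic section $\bar\sigma\colon \overline{G}\to \overline{E}$ of the bottom row, and lift it to a normalised section $\sigma\colon G\to E$ of the top row satisfying the compatibility $p\circ\sigma=\bar\sigma\circ p$. This is possible because, given $g\in G$, any lift $e\in\pi^{-1}(g)$ satisfies $p(e)\in\bar\pi^{-1}(p(g))$; hence $p(e)$ and $\bar\sigma(p(g))$ differ by an element of $K\subset \overline{E}$, which (using that $p|_K$ is the identity) can be used to correct $e$ to the desired $\sigma(g)\in\pi^{-1}(g)$. Setting $\sigma(1)=1$ does not conflict with this construction since $\bar\sigma$ is normalised.

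Next, I would compute the Euler cocycles. For the chosen sections we have
\[\omega(g_1,g_2)=\sigma(g_1)\sigma(g_2)\sigma(g_1g_2)^{-1}\in K,\qquad \bar\omega(\bar g_1,\bar g_2)=\bar\sigma(\bar g_1)\bar\sigma(\bar g_2)\bar\sigma(\bar g_1\bar g_2)^{-1}\in K.\]
Applying $p$ to $\omega(g_1,g_2)$ and using $p\circ\sigma=\bar\sigma\circ p$ yields $p(\omega(g_1,g_2))=\bar\omega(p(g_1),p(g_2))$. Since $p|_K=\mathrm{id}_K$ by the left square of the diagram, $p(\omega(g_1,g_2))=\omega(g_1,g_2)$, and therefore $\omega=p^*\bar\omega$ as cocycles $G^2\to K$. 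In particular, $[E]=p^*[\overline{E}]$ in $H^2(G;K)$, which is the first assertion of the lemma.

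Finally, the boundedness statement is immediate: if $[\overline{E}]$ is bounded, choose $\bar\sigma$ so that $\bar\omega$ is bounded (which exists by Definition \ref{def:bddce} together with Lemma \ref{lem:normalisation}, possibly after modifying $\bar\sigma$ by a bounded coboundary and relifting). Then $p^*\bar\omega$ is bounded because its norm at any pair $(g_1,g_2)$ equals that of $\bar\omega$ at $(p(g_1),p(g_2))$, so it represents the bounded class $[E]$. Equivalently, this is just the fact that the comparison map is natural with respect to $p^*$, so boundedness is preserved under pullback. There is no real obstacle here; the one mildly subtle point is ensuring the existence of a section compatible with $\bar\sigma$, which is handled in the first step.
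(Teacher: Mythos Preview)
Your proof is correct and follows essentially the same approach as the paper: construct a normalised section $\sigma\colon G\to E$ compatible with $\bar\sigma$ via $p\circ\sigma=\bar\sigma\circ p$, then compare Euler cocycles. Your construction of $\sigma$ (correcting an arbitrary lift $e\in\pi^{-1}(g)$ by the unique $k\in K$ making $p(ke)=\bar\sigma(p(g))$) is slightly more direct than the paper's (which first lifts $\bar\sigma$ to a map $\tau\colon\overline{G}\to E$ and then uses the decomposition $g=\pi(n)\cdot\pi\tau(p(g))$ to set $\sigma(g)=n\cdot\tau(p(g))$), but both arrive at the same compatible section and the same cocycle comparison.
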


\begin{proof}
    We choose a normalised section $\overline \sigma \colon \overline G \to \overline E$. We also choose an injective map $\tau \colon \overline G \to E$ such that $p \tau = \overline \sigma$ and $\tau(1_{\overline{G}}) = 1_E$. This way Diagram \eqref{diagram qce} is enriched as follows:
\[\begin{tikzcd}
	1 & K & E & G & 1 \\
	1 & K & {\overline{E}} & {\overline{G}} & 1
	\arrow[from=1-1, to=1-2]
	\arrow[from=1-2, to=1-3]
	\arrow["{=}"', from=1-2, to=2-2]
	\arrow["\pi", from=1-3, to=1-4]
	\arrow["p"', from=1-3, to=2-3]
	\arrow[from=1-4, to=1-5]
	\arrow["p", from=1-4, to=2-4]
	\arrow[from=2-1, to=2-2]
	\arrow[from=2-2, to=2-3]
	\arrow["{\overline \pi}", shift left, from=2-3, to=2-4]
	\arrow["\tau"', from=2-4, to=1-3]
	\arrow["{\overline\sigma}", shift left, from=2-4, to=2-3]
	\arrow[from=2-4, to=2-5]
\end{tikzcd}\]
    Now $\pi \tau$ is a section for $p \colon G \to \overline G$, so every element of $G$ can be written uniquely as $g = \pi(n) \cdot \pi \tau(p(g))$ for some $n\in N$. We define $\sigma(g) = n \cdot \tau(p(g))$, so $\sigma \colon G \to E$.
    First, note that $\sigma$ is indeed a normalised section: $\sigma(1_G) = 1_E$, and if $g = \pi(n) \pi \tau (p(g))$, then
    \[\pi\sigma(g) = \pi(n \cdot \tau(p(g))) = \pi(n) \cdot \pi \tau(p(g)) = g.\]
    Secondly, we claim that $p \sigma = \overline \sigma p$, indeed
    \[p\sigma(g) = p(n \cdot \tau(p(g))) = p\tau(p(g)) = \overline \sigma(p(g)),\]
    which implies that the projection $p \colon E \to \overline E$ sends
    \[\sigma(g)\sigma(h)\sigma(gh)^{-1} \mapsto \overline \sigma(p(g)) \overline \sigma(p(h)) \overline \sigma(p(gh))^{-1}.\]
    This shows that the cocycle defined by $\sigma$ is indeed the pullback of the cocycle defined by $\overline \sigma$, and concludes the proof.
\end{proof}

This condition is however not sufficient, as the following examples show:

\begin{ex}
\label{Heisenberg}
    Consider the group $Z \times F_2$, where $Z = \langle z \rangle$ is infinite cyclic, and $F_2 = \langle x, y \rangle$ is free of rank $2$. Let $N$ be the normal closure of $z^{-1}[x, y]$, which intersects $Z$ trivially. The quotient $(Z \times F_2) / N$ is the Heisenberg group $H_3$, and the quotient $F_2 / \pi(N)$ is the free Abelian group $\mathbb{Z}^2$. So we have a map of central extensions:
\[\begin{tikzcd}
	1 & Z & {Z \times F_2} & {F_2} & 1 \\
	1 & Z & {H_3} & {\mathbb{Z}^2} & 1
	\arrow[from=1-1, to=1-2]
	\arrow[from=1-2, to=1-3]
	\arrow[from=1-2, to=2-2]
	\arrow[from=1-3, to=1-4]
	\arrow[from=1-3, to=2-3]
	\arrow[from=1-4, to=1-5]
	\arrow[from=1-4, to=2-4]
	\arrow[from=2-1, to=2-2]
	\arrow[from=2-2, to=2-3]
	\arrow[from=2-3, to=2-4]
	\arrow[from=2-4, to=2-5]
\end{tikzcd}\]
The top one splits, so it is trivial and in particular bounded. The bottom one is not trivial, and in fact it maps to a generator of $H^2(\mathbb{Z}^2; \mathbb{R}) \cong \mathbb{R}$ under the change of coefficients map. Because $\mathbb{Z}^2$ is amenable, $H^2_b(\mathbb{Z}^2; \mathbb{R})$ vanishes \cite[Chapter 3]{frigerio}, and so the bottom central extension cannot be bounded.
\end{ex}

\begin{ex}\label{ex:all_ext_are_quotient}
    More generally, we claim that every central extension can be expressed as a quotient of a trivial (thus bounded) central extension of a free group. Let $1 \to K \to E \to G \to 1$ be a central extension. Let $F$ be a free group and $p \colon F \to G$ a presentation, which we lift to a homomorphism $\tilde{p} \colon F \to E$, by choosing lifts in $E$ of the generators of $G$. Define $P \colon K \times F \to E$ to be the product of $\tilde{p}$ and the inclusion $K \to E$; this is a homomorphism because $K$ is central in $G$. An element $(k, w) \in K \times F$ belongs to the kernel of $P$ if and only if $\tilde{p}(w) = k$; in particular $K$ intersects this kernel trivially. So we indeed have a quotient central extension:
    \[\begin{tikzcd}
	1 & K & {K \times F} & {F} & 1 \\
	1 & K & {E} & {G} & 1
	\arrow[from=1-1, to=1-2]
	\arrow[from=1-2, to=1-3]
	\arrow[from=1-2, to=2-2]
	\arrow[from=1-3, to=1-4]
	\arrow[from=1-3, to=2-3]
	\arrow[from=1-4, to=1-5]
	\arrow[from=1-4, to=2-4]
	\arrow[from=2-1, to=2-2]
	\arrow[from=2-2, to=2-3]
	\arrow[from=2-3, to=2-4]
	\arrow[from=2-4, to=2-5]
\end{tikzcd}\]
\end{ex}

We can restate Question \ref{question qce} in terms of extendability of a certain quasihomomorphism. We start with an easy lemma, which is a variation of a well-known fact regarding real-valued homogeneous quasimorphisms.

\begin{lemma}
\label{induced qm}
    Let $N \le G$ be a normal subgroup, and denote by $p \colon G \to G/N$ the quotient. Let $\chi \colon G \to K$ be a quasihomomorphism. If $\chi|_N \equiv 0$, then there exists a quasihomomorphism $\phi \colon G/N \to K$ such that $\chi$ is at a bounded distance from $\phi p$. Moreover, if $H \le G$ is a subgroup such that $\chi|_H$ is a homomorphism, then $\phi$ can be chosen so that $\phi p|_H = \chi|_H$.
\end{lemma}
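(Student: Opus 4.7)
The plan is to build $\phi$ directly from a set-theoretic section $s \colon G/N \to G$ of $p$, chosen with some care in order to handle the ``moreover'' part.

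The starting observation is that, since $\chi$ is a quasihomomorphism with some finite defect set $F \subset K$ and $\chi|_N \equiv 0$, for every $g \in G$ and $n \in N$ we have
\[\chi(gn) - \chi(g) - \chi(n) \in F, \qquad \text{hence} \qquad \chi(gn) - \chi(g) \in F.\]
In other words, $\chi$ is ``coarsely constant'' on each $N$-coset, taking at most $|F|$ values there. This means that picking any representative for each coset will specify a well-defined coarse value of $\chi$ on the coset.

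Concretely, I would choose $s \colon G/N \to G$ to be any set-theoretic section with the additional property that $s(\bar h) \in H$ whenever $\bar h \in p(H)$; this is possible because each such coset contains elements of $H$. Then I set $\phi(\bar g) := \chi(s(\bar g))$. For $g \in G$, writing $g = s(p(g)) \cdot n$ with $n \in N$, the previous paragraph yields $\chi(g) - \phi(p(g)) \in F$, which gives the bounded-distance statement. To verify that $\phi$ is a quasihomomorphism, for $\bar g_1, \bar g_2 \in G/N$ write
\[s(\bar g_1) s(\bar g_2) = s(\bar g_1 \bar g_2) \cdot n, \qquad n \in N,\]
so that applying $\chi$ and using both the defect bound and the coarse-constancy on cosets of $N$ gives
\[\phi(\bar g_1) + \phi(\bar g_2) - \phi(\bar g_1 \bar g_2) \in F - F - F,\]
a finite set, as required.

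For the ``moreover'' part, suppose $\chi|_H$ is a homomorphism; then in particular $\chi|_{H \cap N} \equiv 0$ (which is automatic from $\chi|_N \equiv 0$). For $h \in H$, by the choice of section we have $s(p(h)) \in H$, so $s(p(h)) = h n$ for some $n \in H \cap N$. Since $\chi|_H$ is an \emph{honest} homomorphism, not merely a quasihomomorphism, we get
\[\phi(p(h)) = \chi(s(p(h))) = \chi(h) + \chi(n) = \chi(h),\]
exactly as wanted. The argument is essentially bookkeeping; the only mild subtlety is arranging the section to land inside $H$ on the cosets where this makes sense, which is what converts the coarse equality $\chi \sim \phi p$ into a genuine equality on $H$.
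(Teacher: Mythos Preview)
Your proof is correct and takes a genuinely different route from the paper's. The paper decomposes $K \cong \Z^k \times T$, passes each $\Z$-coordinate of $\chi$ to its real-valued homogeneous representative, uses the standard fact that a homogeneous quasimorphism vanishing on a normal subgroup descends to the quotient, then takes integer parts; the torsion part is handled separately. Your argument instead builds $\phi$ directly from a section $s \colon G/N \to G$ landing in $H$ over $p(H)$, and uses only the finite-defect condition and the exact vanishing of $\chi$ on $N$. This is more elementary: it avoids real coefficients, homogeneisation, and the splitting of $K$, and it makes the ``moreover'' clause a one-line consequence of the choice of section rather than a separate check per coordinate. The paper's approach, by contrast, ties the lemma to the ambient machinery of homogeneous quasimorphisms used elsewhere in Section~\ref{sec:background}, which is stylistically consistent but not logically necessary here.

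One minor point of bookkeeping: your displayed conclusion $\phi(\bar g_1) + \phi(\bar g_2) - \phi(\bar g_1 \bar g_2) \in F - F - F$ is slightly off; tracking the two applications of the defect condition gives membership in $F + F$ (or $-F - F$, depending on sign conventions). This does not affect the argument, since the point is only that the set is finite.
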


\begin{proof}
    Let $K = \Z^k \times T$, where $k \geq 0$ and $T$ is a finite abelian group. Accordingly, write $\chi = (\chi^1, \ldots, \chi^k, \chi^T)$, and note that $\chi^1, \ldots, \chi^k$ are quasimorphisms (while $\chi^T$ can be any function $G \to T$).    
    Recall that every quasimorphism $\chi^i \colon G \to \mathbb{R}$ is at a bounded distance from a unique homogeneous quasimorphism $\chi^i_h$ (Remark \ref{ref:plasmon}). The hypothesis, and uniqueness of homogeneous representatives, implies that $\chi^i_h|_N \equiv 0$ for all $1 \leq i \leq k$. Therefore, by \cite[Remark 2.90]{calegari} there exist homogeneous quasimorphisms $\phi^i_h \colon G/N \to \mathbb{R}$ such that $\chi^i_h = \phi^i_h p$ for all $1\le i\le k$. Setting $\phi^i$ to be the integral part of $\phi^i_h$, we obtain a quasimorphism $\phi^i \colon G/N \to \mathbb{Z}$ such that $\phi^i p \sim \phi^i_h p = \chi^i_h \sim \chi^i$. Setting $\phi = (\phi^1, \ldots, \phi^k, \phi^T)$, where $\phi^T \colon G/N \to T$ is any function, we obtain that $\chi$ is at a bounded distance from $\phi p$. Notice that $\phi$ is indeed a quasihomomorphism, as its entries are quasimorphisms. 

    Suppose that $\chi|_H$ is a homomorphism. Then by uniqueness of homogeneous representatives, $\chi^i|_H = \chi^i_h |_H = \phi^i_h p|_H$ for all $i$. In particular, $\phi^i_h$ takes integer values on $p(H)$, and so $\phi^i|_{p(H)} = \phi^i_h|_{p(H)}$. Moreover, because $\chi|_N \equiv 0$, there exists a map $\phi^T \colon G/N \to T$ such that $\phi^T p|_H = \chi^T|_H$. With this choice of $\phi^T$, we have $\phi p|_H = \chi|_H$, as promised.
\end{proof}

Back to the setting of Diagram \eqref{diagram qce}, we now provide the characterisation of boundedness of quotient central extensions in terms of extendable quasihomomorphisms. Recall from Lemma \ref{qce pullback} that, in order for the quotient central extension to be bounded, the original central extension needs to be bounded, so we assume this throughout.

\begin{prop}
\label{bounded iff extendable}
    With the notation of Diagram \eqref{diagram qce}, suppose that $[E]$ is bounded, and let $\chi \colon E \to K$ be a quasihomomorphism such that $\chi|_K$ is the identity, provided by Proposition \ref{bounded via qm:abelian_ker}. Then:
    \begin{itemize}
        \item The map $\chi \pi^{-1} \colon \pi(N) \to K$ is a well-defined quasihomomorphism, and it is almost invariant under conjugacy by $G$. 
        \item If $\omega$ is a normalised bounded cocycle representing $[E]$, then $\chi$ can be chosen so that $\delta(\chi\pi^{-1}) = \omega|_{\pi(N)}$.
        \item $[\overline{E}]$ is bounded if and only if $\chi \pi^{-1}$ can be extended to a quasihomomorphism $G \to K$, for some (equivalently, every) choice of $\chi$.
    \end{itemize}
\end{prop}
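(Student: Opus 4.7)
My plan is to prove the three bullets in order, using Proposition~\ref{bounded via qm:abelian_ker} and Lemma~\ref{induced qm} as the main ingredients.

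For the first bullet, since $\pi|_N$ is a group isomorphism onto $\pi(N)$, the composition $\chi\pi^{-1}$ is a well-defined quasihomomorphism. For almost $G$-invariance under conjugation, given $g\in G$ and $h\in\pi(N)$, I would pick any lift $\tilde g\in E$ of $g$ and note that $\pi^{-1}(ghg^{-1})=\tilde g\,\pi^{-1}(h)\,\tilde g^{-1}$, using normality of $N$ in $E$. Applying the quasihomomorphism property of $\chi$ twice, together with the fact that $\chi(\tilde g)+\chi(\tilde g^{-1})$ is uniformly bounded (from the defect inequality at $\tilde g,\tilde g^{-1}$), gives $\chi\pi^{-1}(ghg^{-1})\sim\chi\pi^{-1}(h)$. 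Independence of the choice of lift is immediate, since two lifts differ by an element of $K$ on which $\chi$ is a genuine homomorphism.

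For the second bullet, I would realize $E$ concretely as $K\times G$ with multiplication twisted by the given normalised bounded cocycle $\omega$, and take $\chi$ to be projection onto the first factor, as in the proof of the implication $(1)\Rightarrow(2)$ of Proposition~\ref{bounded via qm:abelian_ker}. Since $N\cap K=\{1\}$, every element of $N$ is uniquely of the form $(n(h),h)$ for some function $n\colon\pi(N)\to K$, and then $\chi\pi^{-1}=n$ tautologically. Multiplying two elements of $N$ and comparing first coordinates yields the identity $\omega(g,h)=n(gh)-n(g)-n(h)=\delta(\chi\pi^{-1})(g,h)$ on $\pi(N)$.

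For the ``if'' direction of the third bullet, given a quasihomomorphism $\tilde\phi\colon G\to K$ extending $\chi\pi^{-1}$, I would define $\bar\chi\colon\bar E\to K$ by $\bar\chi(p(e))=\chi(e)-\tilde\phi(\pi(e))$. Two lifts of the same element of $\bar E$ differ by some $n\in N$, and using $\chi(en)\sim\chi(e)+\chi\pi^{-1}(\pi(n))$ together with $\tilde\phi(\pi(en))\sim\tilde\phi(\pi(e))+\chi\pi^{-1}(\pi(n))$, the two corrections cancel, so $\bar\chi$ is well-defined up to a uniform bounded error. It is then a quasihomomorphism as a difference of quasihomomorphisms, and restricts to the identity on $K\le\bar E$ up to a bounded correction; the implication $(2)\Rightarrow(1)$ of Proposition~\ref{bounded via qm:abelian_ker} (which is insensitive to bounded errors in $\chi|_K$) then gives boundedness of $[\bar E]$. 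For the ``only if'' direction, I would assume $[\bar E]$ is bounded, choose a quasihomomorphism $\bar\chi\colon\bar E\to K$ with $\bar\chi|_K=\mathrm{id}$, and set $\phi:=\chi-\bar\chi\circ p\colon E\to K$. This is a quasihomomorphism vanishing on $K$, so Lemma~\ref{induced qm} supplies a quasihomomorphism $\tilde\phi\colon G\to K$ with $\tilde\phi\pi\sim\phi$. Restricting to $N$ gives $\tilde\phi(\pi(n))\sim\phi(n)\sim\chi(n)=\chi\pi^{-1}(\pi(n))$, so $\tilde\phi|_{\pi(N)}$ is at bounded distance from $\chi\pi^{-1}$; modifying $\tilde\phi$ by a bounded function supported on $\pi(N)$ then enforces exact equality there without destroying the quasihomomorphism property, since any bounded perturbation of a quasihomomorphism is again one. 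I expect the main subtlety to lie precisely in this final \emph{coarse-to-exact} adjustment, together with carefully tracking which identities hold on the nose and which only up to uniform bounded error.
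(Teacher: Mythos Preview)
Your proof is correct and follows essentially the same route as the paper: the first two bullets are handled identically, and the third bullet uses the same idea of considering $\chi-\phi\pi$ (resp.\ $\chi-\bar\chi\circ p$) and descending. The only cosmetic difference is that for the ``if'' direction the paper invokes Lemma~\ref{induced qm} to pass from the quasihomomorphism $\chi-\phi\pi$ on $E$ (vanishing on $N$) to one on $\bar E$, whereas you argue this descent by hand via lifts; your phrase ``a difference of quasihomomorphisms'' is slightly glib since $\bar\chi$ lives on $\bar E$, but your well-definedness-up-to-bounded-error argument is exactly what makes this work, so there is no gap.
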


Given a normal subgroup $\Lambda \le \Gamma$, we say that a quasihomomorphism $\phi \colon \Lambda \to K$ is \emph{almost invariant under conjugacy by $\Gamma$} if there exists a constant $C > 0$ such that
\[\| \phi(\gamma \lambda \gamma^{-1}) - \phi(\lambda) \| < C\]
for all $\gamma \in \Gamma, \lambda \in \Lambda$. Note that quasihomomorphisms on $\Gamma$ are almost invariant under conjugacy by $\Gamma$, so this is a necessary condition for a quasihomomorphism on $\Lambda$ to be extendable, which will come up in the next result.

\begin{proof}[Proof of Proposition~\ref{bounded iff extendable}]
    $\chi \pi^{-1}$ is well-defined because $\pi|_N$ is an isomorphism. Because $\chi$ is defined on $E$, it is almost invariant under conjugacy by $E$, and then it follows that $\chi \pi^{-1}$ is almost invariant under conjugacy by $G$. 
    
    Moving to the second bullet, from the proof of Proposition \ref{bounded via qm:abelian_ker} we see that if $\omega$ is a normalised bounded cocycle representing $E$, seen as the Cartesian product $K \times G$ with the product twisted by $\omega$, then $\chi$ can be chosen to be the projection onto the first factor, while $\pi$ is the projection onto the second factor. Now every element $g \in N$ can be written uniquely as $(b(g), \pi(g))$, for some $b(g) \in K$. It follows that $\chi \pi^{-1} \colon \pi(N) \to K$ is just the map $\pi(g) \mapsto b(g)$. Because $N$ is a group and $\pi$ is injective, for all $g_1, g_2 \in N$ it holds
    \begin{align*}
        (b(g_1g_2), \pi(g_1g_2)) &= (b(g_1), \pi(g_1)) \cdot (b(g_2), \pi(g_2)) \\
        &= (b(g_1) + b(g_2) + \omega(\pi(g_1), \pi(g_2)), \pi(g_1g_2));
    \end{align*}
    which shows that $\delta (\chi \pi^{-1}) = \omega$ as cocycles $\pi(N)^2 \to K$.

    We finally prove the equivalence in the third bullet. Suppose that $\chi$ is such that $\chi \pi^{-1}$ extends to a quasihomomorphism $\phi \colon G \to K$. Consider the quasihomomorphism $\chi - \phi \pi \colon E \to K$. Notice that $\chi - \phi \pi$ restricts to the zero map on $N$ and to the identity on $K$. Therefore, Lemma \ref{induced qm} yields a quasihomomorphism $\psi \colon \overline{E} \to K$ such that $\psi p$ is at a bounded distance from $\chi - \phi \pi$, and restricts to the identity on $K$. Thus $[\overline{E}]$ is bounded, by the second item of Proposition~\ref{bounded via qm:abelian_ker}.

    Now suppose that $[\overline{E}]$ is bounded, so by Proposition~\ref{bounded via qm:abelian_ker} there exists a quasihomomorphism $\psi \colon \overline{E} \to K$ that restricts to the identity on $K$. Then $\psi p \colon E \to K$ vanishes on $N$ and is still the identity on $K$. Therefore $\chi - \psi p \colon E \to K$ vanishes on $K$, and coincides with $\chi$ on $N$, for any choice of $\chi$. By Lemma \ref{induced qm}, there exists a quasihomomorphism $\phi \colon G \to K$ such that $\phi \pi \colon E \to K$ is at a bounded distance from $\chi - \psi p$. So, up to changing $\phi$ by a bounded amount on $\pi(N)$, it is an extension of $\chi \pi^{-1}$.
\end{proof}

\begin{rem}
    Shuhei Maruyama pointed out to us that both Propositions~\ref{bounded via qm:abelian_ker} and~\ref{bounded iff extendable} can also be proved by diagram chasing, using the exact sequences from \cite{extendable:main}.
\end{rem}

For completeness, let us include an alternative characterisation of boundedness of quotient central extensions, in terms of sections that are quasihomomorphisms, as in the last item of Proposition~\ref{bounded via qm:abelian_ker}.

\begin{prop}
\label{bounded iff compatible section}
    With the notation of Diagram \eqref{diagram qce}, suppose that $[E]$ is bounded, so that there exists a section $s \colon G \to E$ of $\pi$ that is a quasihomomorphism, by Proposition~\ref{bounded via qm:abelian_ker}. Then $[\overline{E}]$ is bounded if and only $s$ can be chosen so that $s(\pi(N)) \subset N$.
\end{prop}

\begin{proof}
    Suppose first that $[\overline{E}]$ is bounded, so by Proposition~\ref{bounded via qm:abelian_ker} there exists a quasihomomorphism $\chi \colon \overline{E} \to K$ such that $\chi|_K$ is the identity. Since $[E]$ is bounded, by Proposition~\ref{bounded via qm:abelian_ker} again, there exists a section $s \colon G \to E$ that is a quasihomomorphism. We define $\hat{s}(g) = s(g) \cdot \chi(ps(g))^{-1}$.
    Since $\chi$ takes values in $K$, $\hat{s}$ is still a section of $\pi$. As a composition of (quasi)homomorphisms $g \mapsto (\chi p s)^{-1}$ is a quasihomomorphism. Moreover it is easy to see that the product of two quasihomomorphisms is a quasihomomorphism provided that their images commute (this is the same proof as the fact that a sum of quasimorphisms is a quasimorphism). Since $K$ is central in $E$, we conclude that $\hat{s}$ is a quasihomomorphism. Finally, for $n \in N$, the element $s\pi(n)$ can be written uniquely as $k m$, for $k \in K, m \in N$. Then $\chi(p(km)) = \chi(k) = k$, by the choice of $\chi$. Hence $\hat{s}(\pi(n)) = km \cdot k^{-1} = m \in N$.

    Conversely, suppose that there exists a section $s \colon G \to E$ of $\pi$ that is a quasihomomorphism, with the property that $s (\pi (N)) \subset N$, and let $F$ denote the finite set in Definition \ref{defn:qhm:noncommutative}. By Proposition \ref{bounded via qm:abelian_ker}, we need to find a section $\bar{s} \colon \overline{G} \to \overline{E}$ of $\bar{\pi} \colon \overline{E} \to \overline{G}$ that is a quasihomomorphism. For an element $g \in G$ we define $\bar{s} \colon \overline{G} \to \overline{E}$ as $\bar{s} p (g) = p s (g)$. This is \emph{almost} well-defined, indeed for $g \in G, n \in N$
    \[ps(g  \pi(n)) \in p(s(g) s\pi(n) \cdot F) = ps(g) \cdot p(F),\]
    where we used that $s\pi(n)\in N$ and therefore $ps\pi (n)=1$. So there is a finite amount of ambiguity in our definition of $\bar{s}$, and we can just choose one value among the possible ones. We have $\bar{\pi} \bar{s} p(g) = \bar{\pi} p s(g) = p \pi s(g) = p(g)$, so $\bar{s}$ is a section. To see that it is a quasihomomorphism, we compute (accounting for the ambiguity in the definition):
    \begin{align*}
        \bar{s}p(gh) &\in ps(gh) \cdot p(F) \subset p s(g) \cdot ps(h) \cdot p(F)^2 \\
        &\subset \bar{s} p(g) p(F) \cdot \bar{s}p(h) p(F) \cdot p(F)^2 \\
        &\subset \bar{s}p(g) \bar{s}p(h) \cdot p(F)^2 p(F)^{-1} p(F)^3;
    \end{align*}
    where the last inclusion used the fact that, for a quasihomomorphism $f \colon A \to B$, if $D$ is the set in Definition \ref{defn:qhm:noncommutative}, then for all $a \in A$ it holds $f(a)^{-1} D f(a) \subset D^2 D^{-1}$ \cite[Section 2.3]{fujikapo}. Since the set $p(F)^2 p(F)^{-1} p(F)^3$ is still finite, we conclude.
\end{proof}

\subsection{Extending quasimorphisms}
In view of Proposition~\ref{bounded iff extendable}, the problem of whether the quotient central extension $[\overline{E}]$ is bounded is equivalent to the problem of whether a specific quasihomomorphism $\pi(N) \to K$ extends to $G$. This problem is non-trivial. On the one hand, the easy obstruction for extendability, that is almost invariance under conjugacy by $G$, is excluded by the the first bullet of Proposition \ref{bounded iff extendable}. On the other hand, almost invariance under conjugacy is not sufficient for a quasihomomorphism to be extendable in general, as we have seen in Example \ref{Heisenberg}. 

\begin{rem}\label{rem:extendable_boundary}
    In the context of Proposition \ref{bounded iff extendable}, the quasihomomorphism one has to extend is always such that its boundary is extendable. This shows a stark difference between the problem of extending bounded cohomology classes and quasihomomorphisms \cite[end of Section 1.3]{extendable:survey}.
\end{rem}

\subsubsection{Extendability results in the literature}
The problem of extending quasihomomorphisms has been studied at length in recent years, and produced various examples of non-extendable quasimorphisms, and criteria for extendability, see \cite{extendable:survey} for a survey. In particular, we recall the following criteria:

\begin{prop}[{\cite[Proposition 1.6]{extendable:bavard}} and {\cite[Theorem 1.9]{extendable:main}}]\label{thm:bavard}
    Let $1 \to N \to G \to G/N \to 1$ be a short exact sequence of groups. Then any of the following conditions guarantees the extendability of every quasihomomorphism $N \to K$ that is almost invariant under conjugacy by $G$.
    \begin{enumerate}
        \item The extension virtually splits.
        \item $H^3_b(G/N; \mathbb{R}) = H^2_b(G/N; \mathbb{R}) = H^2(G/N; \mathbb{R}) = 0$.
    \end{enumerate}
\end{prop}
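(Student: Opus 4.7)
This statement gathers two results from the cited papers; the approach I would take reduces both items to a single initial simplification.

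The common first step is to pass from $K$ to $\R$. Writing $K = \Z^k \oplus T$ with $T$ finite, each integer coordinate may be homogenised (Remark \ref{ref:plasmon}), extended as a real-valued quasimorphism to $G$, and then rounded back to $\Z$; the finite-torsion coordinate is dealt with directly, since any almost $G$-invariant function to a finite group extends up to bounded error on $N$. So from now on I work with a homogeneous $G$-invariant quasimorphism $\phi\colon N\to\R$.

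For (1), suppose the extension virtually splits. Passing to a smaller finite-index subgroup if needed, I can arrange a normal subgroup $\overline H \trianglelefteq G/N$ of finite index and a section $s\colon \overline H\to G$ that is a genuine homomorphism, so that $\tilde H:= \pi^{-1}(\overline H)= N\rtimes s(\overline H)$ is normal of finite index in $G$. I would first extend $\phi$ to $\tilde H$ by the formula $\tilde\phi(n\cdot s(h)):=\phi(n)$; its defect is controlled by the defect of $\phi$ applied to $n_1\cdot (s(h_1) n_2 s(h_1)^{-1})$, which is close to $\phi(n_1)+\phi(n_2)$ by almost $G$-invariance of $\phi$. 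To pass from $\tilde H$ to $G$, I would pick a transversal $\{g_i\}$ and set $\Phi(hg_i):=\tilde\phi(h)$; the quasimorphism property then follows from normality of $\tilde H$ combined with almost $G$-invariance of $\tilde\phi$ (inherited from $\phi$).

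For (2), the plan uses the Hochschild--Serre-type five-term exact sequences in ordinary and bounded cohomology of $1\to N\to G\to G/N\to 1$:
\[0 \to H^2_b(G/N;\R) \to H^2_b(G;\R) \to H^2_b(N;\R)^{G/N} \to H^3_b(G/N;\R),\]
\[0 \to H^2(G/N;\R) \to H^2(G;\R) \to H^2(N;\R)^{G/N}.\]
Given $\phi$, its coboundary $\delta\phi$ defines a class $[\delta\phi]\in H^2_b(N;\R)^{G/N}$. Since $H^3_b(G/N;\R)=0$, this class lifts to some $[\omega]\in H^2_b(G;\R)$; modifying $\phi$ by a bounded function if necessary, I may assume $\omega|_N=\delta\phi$ on the nose. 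The image of $[\omega]$ in $H^2(N;\R)^{G/N}$ is then zero, so by the ordinary five-term sequence $[\omega]$ comes from $H^2(G/N;\R)$, which vanishes; hence $\omega=\delta\Phi$ for some real cochain $\Phi\colon G\to\R$. Now $\Phi$ is a quasimorphism (its coboundary is the bounded cocycle $\omega$) and $\Phi|_N-\phi$ is a genuine homomorphism $N\to\R$. A final correction, absorbed by the vanishing of $H^2_b(G/N;\R)$ together with the triviality of bounded first cohomology, makes $\Phi$ restrict to $\phi$.

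The main obstacle I expect is the bookkeeping in (2): each cohomological step produces a correction term that must simultaneously preserve boundedness of the coboundary and force the right restriction to $N$, and the three simultaneous vanishing assumptions are exactly what kills the obstructions in $H^2$, $H^2_b$, and $H^3_b$ that appear along the way.
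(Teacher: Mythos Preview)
The paper does not prove this proposition; it is quoted from \cite{extendable:bavard} and \cite{extendable:main}, and the only justification the paper supplies is the reduction from a finitely generated $K$ to real-valued quasimorphisms (the sentence following Proposition~\ref{extending hypemb}). Your opening paragraph reproduces exactly that reduction, so at that point you have matched everything the paper actually contains.

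Your sketches for (1) and (2) therefore go beyond the paper. The argument for (1) is in the right spirit; the step needing care is the claim that $\tilde\phi$ is almost $G$-invariant, which unwinds to bounding $\phi\bigl(g_i\, s(a)\, g_i^{-1}\, s(\bar g_i a \bar g_i^{-1})^{-1}\bigr)$ uniformly over $a\in\overline H$ for each transversal element $g_i$, and this does not follow immediately from the $G$-invariance of $\phi$. In (2) there is a genuine gap: the sequence
\[0 \to H^2(G/N;\R) \to H^2(G;\R) \to H^2(N;\R)^{G/N}\]
is \emph{not} exact at the middle term in general. The kernel of the restriction map is the filtration piece $F^1 H^2(G;\R)$ from the Lyndon--Hochschild--Serre spectral sequence, and $F^1/F^2$ is a subquotient of $H^1\bigl(G/N;\, H^1(N;\R)\bigr)$, which is not forced to vanish by your hypotheses. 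Hence from $[\omega]|_N = 0$ and $H^2(G/N;\R)=0$ you cannot yet conclude $[\omega]=0$ in $H^2(G;\R)$; this is where the proof in \cite{extendable:main} must do additional work. A smaller point: your final correction, extending the homomorphism $\Phi|_N-\phi$ to $G$, is handled by the ordinary five-term sequence and the hypothesis $H^2(G/N;\R)=0$, not by $H^2_b(G/N;\R)=0$ as you wrote.
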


In another direction, very far from the case of a normal subgroup, we have:

\begin{prop}[\cite{Hull_Osin}, see also \cite{FPS}]
\label{extending hypemb}
    Let $H \le G$ be a hyperbolically embedded subgroup. Then every quasihomomorphism $H \to K$ extends to a quasihomomorphism $G \to K$.
\end{prop}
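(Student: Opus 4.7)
My plan is to produce the extension explicitly by decomposing relative geodesics, following Hull--Osin. First I would fix a relative generating set $X$ of $G$ realising the hyperbolic embedding $H\hookrightarrow_h (G,X)$; by definition, the Cayley graph $\Gamma:=\Gamma(G,X\sqcup H)$ is $\delta$-hyperbolic for some $\delta$, and the relative metric $\hat d$ induced on $H$ is locally finite (every $\hat d$-ball is finite). For each $g\in G$ I would pick a geodesic in $\Gamma$ from $1$ to $g$, read off its $H$-letters $h_1,\dots,h_n\in H$ in order, and define
\[
\tilde\phi(g):=\sum_{i=1}^{n}\phi(h_i).
\]
Choosing the single-edge geodesic for $h\in H$, this tautologically satisfies $\tilde\phi|_H=\phi$, so only two properties remain to be verified: that $\tilde\phi$ is independent of the chosen geodesics up to a bounded error, and that it is a quasimorphism on $G$.

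Both would rest on Osin's bounded coset penetration property for hyperbolically embedded subgroups: any two $(\lambda,c)$-quasigeodesics in $\Gamma$ with common endpoints penetrate the same $H$-cosets, and their corresponding $H$-components differ by elements of uniformly $\hat d$-bounded diameter, while only a uniformly bounded number of components remain unmatched. Since $\hat d$ is locally finite, uniformly $\hat d$-bounded subsets of $H$ are finite; since $\phi$ is a quasimorphism, it is bounded on any finite set. Hence the partial sums $\sum\phi(h_i)$ change only by a uniform constant when the geodesic is perturbed, which yields well-definedness. For the quasimorphism inequality, concatenating the chosen geodesics for $g_1$ and $g_2$ yields a uniform quasigeodesic for $g_1g_2$; applying the matching principle between this concatenation and the chosen geodesic for $g_1g_2$ gives $|\tilde\phi(g_1g_2)-\tilde\phi(g_1)-\tilde\phi(g_2)|\le D$ for a uniform $D$ depending only on $\phi$, $\delta$, and the quasigeodesic constants.

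The main technical obstacle is the matching statement itself: one must isolate the maximal $H$-components of a quasigeodesic, prove their persistence up to bounded $\hat d$-error under quasigeodesic replacement, and bound the number of unmatched components in terms of the hyperbolicity and quasigeodesic constants. This is the essential technical content of Osin's and Hull--Osin's work on hyperbolically embedded subgroups; once granted, the telescoping of partial sums and the resulting defect bound are automatic. A more abstract route, along the lines of the FPS reference, would instead derive extendability from surjectivity of a suitable restriction map in degree two bounded cohomology, bypassing any explicit manipulation of relative geodesics at the cost of importing analogous input at the level of resolutions.
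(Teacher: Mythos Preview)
The paper does not supply a proof of this proposition; it is quoted directly from the cited references, and the paper's only contribution is the sentence immediately following, explaining how to pass from the $\R$-valued case treated there to general finitely generated abelian $K$ (split $K\cong\Z^k\times T$, take integer parts on each $\Z$-factor, and extend the torsion part arbitrarily). Your sketch is an accurate outline of the Hull--Osin construction itself: you correctly identify the BCP-type component-matching lemma for hyperbolically embedded subgroups as the essential technical input and appropriately defer its proof to the reference, and your telescoping argument from that lemma to the defect bound is the standard one. Since you phrase the argument directly for $K$-valued targets, using that a quasihomomorphism is bounded on finite sets, you in effect absorb the paper's reduction step as well; there is nothing to correct, and your write-up simply contains more detail than the paper does.
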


For both propositions, the original statement is about real-valued quasimorphisms, however these statements follow by splitting $K$ as $\Z^k \times T$, choosing arbitrary extensions for the $T$ part, and dealing with the $\Z$ factors by taking the integral part and modifying by a bounded amount.

Recently, Tao developed a very general framework that unifies the results for hyperbolically embedded subgroups and for normal subgroups, and has many new consequences \cite{tao2025extensiontheoremquasimorphisms}.

\subsubsection{Extending quasimorphisms from a union}
We conclude the section by proving a new extendability result, which applies to subgroups obtained as an increasing union of subgroups where quasimorphisms can be extended. We hope that this will find applications towards extending quasimorphisms from ``small-cancellation-like'' normal subgroups.

\begin{lemma}\label{lem:extension_from_union}
    Let $G$ be a finitely generated group, $N \le G$ a normal subgroup, and $\chi \colon N \to \mathbb{Z}$ a quasimorphism. Suppose that $N$ can be expressed as a directed union of subgroups $\{ N_i \}_{i \in I}$, and that for each $i \in I$ there is a quasimorphism $X_i \colon G \to \mathbb{Z}$ that extends $\chi|_{N_i}$, such that the defect of the $X_i$ is uniformly bounded. Then $\chi$ extends to a quasimorphism $X \colon G \to \mathbb{Z}$.
\end{lemma}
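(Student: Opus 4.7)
The plan is to homogenise, extract a pointwise limit via compactness, and then descend back to integer values.

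By Remark \ref{ref:plasmon}, each $X_i$ is within uniformly bounded distance of its homogenisation $X_i^h \colon G \to \mathbb{R}$, and the defects of the $X_i^h$ are uniformly bounded. By uniqueness of homogenisation on cyclic subgroups, $X_i^h$ restricts to the homogenisation $\chi^h$ of $\chi$ on each $N_i$. It thus suffices to produce a homogeneous real-valued quasimorphism $X^h \colon G \to \mathbb{R}$ with $X^h|_N = \chi^h$: then $\lfloor X^h \rfloor$, corrected by a uniformly bounded function supported on $N$, is a $\mathbb{Z}$-valued quasimorphism extending $\chi$. Moreover, for every fixed $n \in N$ one has $n \in N_i$ for all sufficiently large $i$, so $X_i^h(n)$ stabilises at $\chi^h(n)$; hence any pointwise subsequential limit of $\{X_i^h\}$ automatically restricts to $\chi^h$ on $N$.

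The core step is compactness. Modifying $X_i^h$ by any homomorphism $G \to \mathbb{R}$ vanishing on $N_i$ preserves both the defect and the property of extending $\chi^h|_{N_i}$. Since $G$ (hence $G/N$) is finitely generated, one has a finite generating set $S$ for $G$; using the available homomorphism modifications, one would arrange that the modified sequence $\{X_i^h\}$ is uniformly bounded on $S$. The elementary estimate $|X(g)| \le |g|_S \bigl(\max_{s \in S}|X(s)| + D\bigr)$ for any defect-$D$ quasimorphism $X$ then upgrades this to pointwise boundedness on all of $G$, and Tychonoff's theorem supplies a subnet converging pointwise to a homogeneous quasimorphism $X^h \colon G \to \mathbb{R}$ of bounded defect; by the stabilisation noted above, $X^h|_N = \chi^h$.

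The main difficulty is engineering the normalisation in the compactness step: the space of homomorphisms $G \to \mathbb{R}$ vanishing on $N_i$ is finite-dimensional and shrinks toward $\operatorname{Hom}(G/N, \mathbb{R})$ as $i$ grows, and one has to verify that it remains large enough to absorb the variation of $X_i^h$ on chosen lifts of generators of $G/N$, up to the bounded error permitted by the defect. This is the place where the finite generation of $G$ plays an essential role, and where a more careful argument — for instance via the Banach space structure on $Q^h(G, \mathbb{R}) / \operatorname{Hom}(G, \mathbb{R})$ with the defect norm, in which the classes $[X_i^h]$ form a bounded set — may be needed to make the extraction of representatives rigorous.
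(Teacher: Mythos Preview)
Your overall architecture matches the paper's proof: reduce to homogeneous real-valued quasimorphisms, arrange pointwise boundedness after subtracting suitable homomorphisms, then take a limit (the paper uses an ultralimit, you use Tychonoff; these are interchangeable). You also correctly flag the normalisation step as the crux, and your proof is honest that this step is incomplete.

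The gap you identify is genuine, and your suggested fix via the Banach space $Q^h(G)/\operatorname{Hom}(G,\mathbb{R})$ does not by itself resolve it: boundedness of the classes $[X_i^h]$ in the defect norm gives no control on the \emph{values} of representatives, which is exactly what you need for a pointwise limit. The paper closes the gap with Bavard duality. If $g$ has a power in $[G,G]$, then $|X_i^h(g)| \le 2\,\mathrm{scl}(g)\,D(X_i^h)$, which is uniformly bounded without any modification. This already handles the case where $G$ has finite abelianisation. In general, one chooses elements $x_1,\ldots,x_n$ mapping to a basis of $H_1(G;\mathbb{Q})$, with $x_{m+1},\ldots,x_n\in N$ spanning the image of $N$ there; the dual functionals $\lambda_1,\ldots,\lambda_m$ then vanish on $N$, so subtracting $\sum_{j\le m} X_i^h(x_j)\lambda_j$ preserves the restriction to $N_i$ while forcing $X_i'(x_j)=0$ for $j\le m$. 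Now any $g\in G$ satisfies $g^p = xyz$ for some $p$, with $x$ a word in $x_1,\ldots,x_m$ (so $X_i'(x)$ is bounded by the defect), $y\in N$ (so $X_i'(y)=\chi^h(y)$ for large $i$), and $z\in\ker\alpha$ (bounded by Bavard). This replaces your attempt to bound on all of $S$ --- which, as you noticed, runs into trouble when generators have torsion image in $H_1(G;\mathbb{Q})$ and thus cannot be touched by real homomorphisms.
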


\begin{proof}
    By using homogeneous representatives for one direction, and integer parts for the other, as we did before, we can replace $\Z$ by $\R$ in the statement, thus reducing to prove the following. Let $\chi \colon N \to \mathbb{R}$ be a homogeneous quasimorphism. Suppose that $N$ can be expressed as a directed union of subgroup $\{N_i\}_{i \in I}$, and that for each $i \in I$ there is a homogeneous quasimorphism $X_i \colon G \to \mathbb{R}$ that extends $\chi|_{N_i}$, such that the defect of the $X_i$ is uniformly bounded. Then $\chi$ extends to a homogeneous quasimorphism $X \colon G \to \mathbb{R}$. Moreover, because $G$ is finitely generated, thus countable, we can assume that the directed set $I$ is just $\mathbb{N}$ with its well order.

    Let us make a further reduction: it suffices to show that we can modify each $X_i$ to obtain $X_i' \colon G \to \mathbb{R}$, which still extends $\chi|_{N_i}$ and has uniformly bounded defect, and moreover, for every $g\in G$, the sequence $\{|X_i'(g)|\}_{i\in \mathbb{N}}$ is bounded. Indeed, assuming this, fix a non-principal ultrafilter $\omega$ on $\mathbb{N}$. Then we can set $X(g) \coloneqq \lim_\omega X_i(g)$, which is a well-defined real number. Moreover, it is a homogeneous quasimorphism, since ultralimits are norm non-increasing and linear.

    Now consider the map $\alpha \colon G \to H_1(G; \mathbb{Q})$: this factors every homomorphism from $G$ to $\mathbb{Q}$, and its kernel coincides with the set of elements with a power that belongs to $[G, G]$. In particular, if $g \in \ker(\alpha)$, then its stable commutator length $\mathrm{scl}(g)$ is finite, and by Bavard duality \cite{bavard} we have a bound $|X_i(g)| \leq 2 \mathrm{scl}(g) D(X_i)$. This shows that for $g \in \ker(\alpha)$, we do not need to change $X_i$; note that this concludes the proof in the case in which $G$ has finite abelianisation (e.g. for mapping class groups).

    For the general case, let $x_1, \ldots, x_n$ be elements of $G$ that are mapped to a basis of $H_1(G; \mathbb{Q})$, chosen in such a way that $x_1, \ldots, x_m \notin N$ while $x_{m+1}, \ldots, x_n \in N$. Now, for each $j = 1, \ldots, m$, let $\lambda_j \colon G \to \mathbb{Q}$ be the functional dual to $x_j$; that is, if $g = \sum a_i x_i \in H_1(G; \mathbb{Q})$ then $\lambda_j(g) = a_j$. For each $i \in \mathbb{N}$, we set
    \[X_i' \coloneqq X_i - \sum\limits_{j = 1}^m X_i(x_j) \cdot \lambda_j.\]
    As a result, $X_i'$ is a homogeneous quasimorphism with the same defect as $X_i$, the same restriction on $N$ (in particular it is still an extension of $\chi|_{N_i}$) and moreover it vanishes on $x_j$ for all $j = 1, \ldots, m$. Now for an element $g \in G$, we can write $\alpha(g) = \sum a_j \alpha(x_j)$, for some $a_j \in \mathbb{Q}$. Thus, for a high enough power $p$, we can write
    \[\alpha(g^p) = \alpha\left( \prod\limits_{j = 1}^n x_j^{p_j} \right)\]
    for some integers $p_j \in \mathbb{Z}$. This way we have an expression $g^p = x y z$, where $x$ is a product of powers of $x_1, \ldots, x_m$; $y$ belongs to $N$, and $z$ belongs to $\ker(\alpha)$. By the first case treated before, $|X_i'(z)|$ is uniformly bounded. Moreover, $|X_i'(y)|$ is uniformly bounded because $y \in N_i$ for $i$ large enough, at which point $X_i(y) = \chi(y)$. Finally, $|X_i'(x)|$ is uniformly bounded, since $x$ is a product of $m$ terms, each of which vanish under $X_i'$. All in all, this shows that $|X_i'(g^p)|$ is uniformly bounded, and thus $|X_i(g)| = \frac{1}{p} |X_i(g^p)|$ is uniformly bounded, which concludes the proof.
\end{proof}

\begin{rem}
    As pointed out in Remark~\ref{rem:extendable_boundary}, in the setting of Proposition~\ref{bounded iff extendable} there is a single bounded cocycle that simultaneously extends all $\delta \chi_i|_{N_i}$. It could be possible to exploit this to find the extensions $X_i$ with uniformly bounded defect.
\end{rem}

\section{Hierarchical hyperbolicity of central extensions}\label{sec:HHG}
In this Section we explore the connections between boundedness and hierarchical hyperbolicity for central extensions. We first recall the notion of hierarchically hyperbolic spaces and groups.
\begin{defn}[Hierarchically hyperbolic space]\label{defn:HHS}
Let $\delta>0$ and $\cal{X}$ be a $(\delta,\delta)$--quasigeodesic space.  A \emph{hierarchically hyperbolic space (HHS) structure with constant $\delta$} for $\cal{X}$ is the data of an index set $\frakS$ and a set $\{ \C W \colon W\in\frakS\}$ of $\delta$-hyperbolic spaces $(\C W,\dist_W)$ such that the following axioms are satisfied.  \begin{enumerate}[label=(\arabic*{})]
    \item\textbf{(Projections.)}\label{axiom:projections} For each $W \in \frakS$, there exists a \emph{projection} $\pi_W \colon \cal{X} \rightarrow 2^{\C W}$  that is a $(\delta,\delta)$--coarsely Lipschitz, $\delta$--coarsely onto, $\delta$--coarse map.
		
    \item \textbf{(Nesting.)} \label{axiom:nesting} If $\frakS \neq \emptyset$, then $\frakS$ is equipped with a  partial order $\nest$ and contains a unique $\nest$--maximal element, denoted by $S$. When $V\nest W$, we say $V$ is \emph{nested} in $W$.  For each $W\in\frakS$, we denote by $\frakS_W$ the set of all $V\in\frakS$ with $V\nest W$.  Moreover, for all $V,W\in\frakS$ with $V\propnest W$ there is a specified non-empty subset $\rho^V_W\subseteq \C W$ with $\diam(\rho^V_W)\leq \delta$.
        
    \item \textbf{(Finite complexity.)} \label{axiom:finite_complexity} Any set of pairwise $\nest$--comparable elements has cardinality at most $\delta$.
            
    \item \textbf{(Orthogonality.)} \label{axiom:orthogonal} The set $\frakS$ has a symmetric relation called \emph{orthogonality}. If $V$ and $W$ are orthogonal, we write $V\perp W$ and require that $V$ and $W$ are not $\nest$--comparable. Further, whenever $V\nest W$ and $W\perp U$, we require that $V\perp U$. We denote by $\frakS_W^\perp$ the set of all $V\in \frakS$ with $V\perp W$.

    \item \textbf{(Containers.)} \label{axiom:containers}  For each $W \in \frakS$ and $U \in \frakS_W$ with $ \frakS_W\cap \frakS_U^\perp \neq \emptyset$, there exists $Q \in\frakS_W-\{W\}$ such that $V \nest Q$ whenever $V \in\frakS_W \cap \frakS_U^\perp$.  We call $Q$ the \emph{container of $U$ in $W$}.
		
    \item \textbf{(Transversality.)}\label{axiom:transversality} If $V,W\in\frakS$ are not orthogonal and neither is nested in the other, then we say $V$ and $W$ are \emph{transverse}, denoted $V\trans W$.  Moreover, for all $V,W \in \frakS$ with $V\trans W$, there are non-empty sets $\rho^V_W\subseteq \C W$ and $\rho^W_V\subseteq \C  V$, each of diameter at most $\delta$.

    \item \textbf{(Consistency.)} \label{axiom:consistency} For all $x \in\cal X$ and $U,V,W\in\frakS$:
		\begin{itemize}
			\item  if $V\trans W$, then $\min\left\{\dist_{W}(\pi_W(x),\rho^V_W),\dist_{V}(\pi_V(x),\rho^W_V)\right\}\leq \delta$,
			\item if $U\nest V$ and either $V\propnest W$, or $V\trans W$ and $W\not\perp U$, then $\dist_W(\rho^U_W,\rho^V_W)\leq \delta$.
		\end{itemize}

\item \textbf{(Bounded geodesic image (BGI).)} \label{axiom:bounded_geodesic_image} For all  $V,W\in\frakS$ and for all $x,y \in \cal{X}$, if  $V\propnest W$ and $\dist_V(\pi_V(x),\pi_V(y)) \geq \delta$, then every $\C W$--geodesic from $\pi_W(x)$ to $\pi_W(y)$ must intersect $\cal{N}_\delta(\rho_W^V)$.

    \item \textbf{(Large links.)} \label{axiom:large_link_lemma} For all $W\in\frakS$ and  $x,y\in\cal X$, there exists a collection $\{V_1,\dots,V_m\}\subseteq\frakS_W -\{W\}$ such that $m\le \delta \dist_{W}(\pi_W(x),\pi_W(y))+\delta$, and for all $U\in\frakS_W - \{W\}$, either $U\nest V_i$ for some $i$, or $\dist_{U}(\pi_U(x),\pi_U(y)) \leq \delta$.

    \item \textbf{(Partial realization.)} \label{axiom:partial_realisation}  If $\{V_i\}$ is a finite collection of pairwise orthogonal elements of $\frakS$ and $p_i\in  \C V_i$ for each $i$, then there exists $x\in \cal X$ \emph{realising} the tuple $(p_i)$, meaning that, for every $i$ and every $W\in \frakS$:
		\begin{itemize}
			\item $\dist_{V_i}(\pi_{V_{i}}(x),p_i)\leq \delta$;
			\item if $V_i\propnest W$ or $W\trans V_i$ then $\dist_{W}(\pi_W(x),\rho^{V_i}_W)\leq \delta$.
		\end{itemize}

    \item\textbf{(Uniqueness.)} There exists a function $\theta \colon [0,\infty) \to [0,\infty)$ so that for all $r \geq 0$, if $x,y\in\cal X$ and $\dist_\cal{X}(x,y)\geq\theta(r)$, then there exists $W\in\frakS$ such that $\dist_W(\pi_W(x),\pi_W(y))\geq r$. \label{axiom:uniqueness}
\end{enumerate}

    We use $\frakS$ to denote the HHS structure.  We call an element $U\in \frakS$ a \emph{domain}, the associated space $\cal CU$ its \emph{coordinate space}, and call the maps $\rho_W^V$ the \emph{relative projections} from $V$ to $W$. The quantity $\delta$ is called a \emph{hierarchy constant} for $\frakS$. We often suppress reference to the projection maps, so for every $x,y\in\cal X$ and $U\in \frakS$ we write $\dist_U(x,y)$ to mean $\dist_U(\pi_U(x), \pi_U(y))$.
\end{defn}

\begin{defn}[Hierarchically hyperbolic group]\label{defn:HHG}
A finitely generated group $G$ is a \emph{hierarchically hyperbolic group} (HHG) if the following hold.
    \begin{enumerate}[label=(\roman*{})]
        \item\label{HHG_structure} $G$ acts metrically properly and coboundedly on a space $\cal X$ admitting a HHS structure $\frakS$.
        \item\label{HHG_action} There is a $\nest$--, $\perp$--, and $\trans$--preserving action of $G$ on $\frakS$ by bijections such that $\frakS$ contains finitely many $G$--orbits.
        \item\label{HHG_isometries} For each $W\in \frakS$ and $g\in G$, there exists an isometry $g_W\colon \cal CW \to \cal C(gW)$ satisfying the following for all $V,W\in\frakS$ and $g,h\in G$.
            \begin{itemize}
                \item The maps $(gh)_W\colon \cal CW \to \cal C(ghW)$ and $g_{hW}\circ h_W\colon \cal CW\to \cal C(hW)$ coincide.
                \item For each $x\in\cal  X$, $g_W(\pi_W(x))=\pi_{gW}(g\cdot x)$ in $\cal C(gW)$.
                \item If $V\trans W$ or $V\propnest W$, then $g_W(\rho^V_W)=\rho^{gV}_{gW}$ in $\cal C(gW)$.
            \end{itemize}
    \end{enumerate}
     We often drop the indices and denote each $g_W$ simply by $g$. When the underlying HHS is not relevant, we denote a HHG by $(G,\frakS)$.
\end{defn}

\begin{rem}[On finite generation]
    Our results in Section \ref{sec:background} only required finite generation of the kernel, not of the quotient or of the extension. However, note that HHGs are finitely generated by definition.
\end{rem}

\begin{rem}[Moral compass]
    When reading the definitions above, the uninitiated reader should keep in mind the motivating example of a HHG, which is the mapping class group of an orientable, finite-type surface. In this context, $\cal X$ is the marking complex from \cite[Section 2.5]{masurminsky2}; the elements of $\frakS$ are isotopy classes of subsurfaces, with nesting given by inclusion and orthogonality corresponding to disjointness (both up to isotopy); finally, the coordinate space associated to a subsurface is the corresponding curve graph, onto which $\cal X$ maps via the subsurface projection. The various axioms from Definition~\ref{defn:HHS} are abstractions of the properties of curve graphs and clean markings from \cite{masurminsky1,masurminsky2}.
\end{rem}

\subsection{Variations on the axioms}
\begin{rem}[Normalisation] 
    For experts, Definition~\ref{defn:HHS} is that of a \emph{normalised} HHS, since the original definition \cite{HHS_II} only required the coordinate projections to have uniformly quasiconvex images. However it is always possible to normalise a HHS structure, by restricting each coordinate space $\C U$ to a neighbourhood of $\pi_U(\cal X)$ (see e.g. \cite[Remark 1.3]{HHS_II}).
\end{rem}

The following is a strengthening of Axiom~\ref{axiom:containers}:

\begin{defn}\label{def:CC}
    A hierarchically hyperbolic space $(\cal X, \frakS)$ has \emph{clean containers} if for each $W \in \frakS$ and $U \in \frakS_W$ with $ \frakS_W\cap \frakS_U^\perp \neq \emptyset$, there exists $Q \in\frakS_W-\{W\}$ such that $V \nest Q$ whenever $V \in\frakS_W \cap \frakS_U^\perp$, \emph{and $Q\orth U$}.
\end{defn}

\begin{rem}[Bounded domain dichotomy]\label{rem:BDD}
A HHG acts cofinitely on the set of domains, and two domains in the same orbit have isometric coordinate spaces. Therefore, up to enlarging $\delta$, we can and will assume that every $U\in \frakS$ is either \emph{unbounded} (meaning that $\diam\C U=\infty$) or $\diam\C U\le \delta$. 
\end{rem}

\begin{rem}[Passing up]\label{rem:passingup}
    The large links Axiom~\ref{axiom:large_link_lemma} can be replaced by the following weaker requirement:
    \begin{enumerate}[start=9, label=(\arabic*{}')]
        \item \textbf{(Passing up.)}\label{axiom:passingup} For every $t>0$, there exists an integer $P=P(t)>0$ such that if $V\in \frakS$ and $x,y\in \cal X$ satisfy $\dist_{U_i}(x,y)>\delta$ for a collection of domains $\{U_i\}_{i=1}^P$ with $U_i\in \frakS_V$, then there exists $W\in \frakS_V$ containing some $U_i$, and such that $\dist_W(x,y)>t$. We call $P\colon(0,\infty)\to (0,\infty)$ the \emph{passing up function}.
    \end{enumerate}
It was shown in \cite[Lemma~2.5]{HHS_II} that every HHS satisfies the Passing up axiom; conversely, in \cite[Section~4.8]{Durham_cubinf} it is argued that the passing up axiom, together with BGI and normalisation, imply the large links axiom. \end{rem}

\begin{rem}\label{rem:checking_passingup}
    Notice that, when one has to verify the Passing up axiom, it is enough to define $P(t)$ only for $t\ge \delta$, because then one can set
$$P'(t)=\begin{cases}
    P(\delta)\mbox{ if }t\le\delta;\\
    P(t)\mbox{ if }t\ge \delta.
\end{cases}$$
In particular, if the candidate collection of domains has the bounded domain dichotomy, one can further require the domains $\{U_i\}_{i=1}^P$ and $W$ to be unbounded.
\end{rem}

\subsection{Action of the centre}
Here we prove several results about the action of the centre of a HHG on the structure. Firstly, we recall a definition:

\begin{defn}[{Big domains \cite{DHS}}]
Let $(\cal X,\frakS)$ be a HHG structure for $G$, fix a basepoint $x\in \cal X$, and let $z\in G$. The \emph{bigset} of $z$ is 
    $$\Bigdom{z}=\{U\in \frakS\,|\,\diam\pi_U(\langle z\rangle\cdot  x)=\infty\}.$$
 It is clear from the definition that $\Bigdom{z}$ does not depend on the choice of the basepoint; furthermore, $\Bigdom{z}\neq \emptyset$ if and only if  $z$ has infinite order \cite[Proposition 6.4]{DHS}, and in this case $\Bigdom{z}$ is a finite collection of pairwise orthogonal domains (this follows from combining \cite[Lemma 6.7]{DHS} with \cite[Lemma 2.1]{HHS_II}). 
\end{defn}

\begin{lemma}\label{lem:central_action}
    Let $(\cal X, \frakS)$ be a HHG structure for a group $E$, and let $K$ be finitely generated and contained in the centre of $E$. Then:
    \begin{enumerate}
        \item\label{item:K_fixes_all} $K$ fixes every unbounded domain.
        \item\label{item:K_acts_unif_bdd} Let $\{z_1,\ldots, z_n\}$ be a basis for $K$. Then there exists $C>0$ such that, for every $x\in \cal X $ and every $W\in \frakS-\bigcup_{i=1}^n \Bigdom{z_i}$, $\diam \pi_W(K\cdot x)<C$.
    \end{enumerate}
\end{lemma}

\begin{proof}
    \eqref{item:K_fixes_all} Let $\delta$ be a HHG constant for the structure. We first argue that the $K$-action on $\frakS$ fixes every unbounded domain. Fix $x_0\in \cal X$, $k\in K-\{1\}$, and an unbounded domain $W\in\frakS$. For every $g,h\in E$,
    $$\dist_{kW}(hx_0, ghx_0)=\dist_{W}(k^{-1}hx_0, k^{-1}ghx_0);$$
    then the triangle inequality, plus the fact that $k$ is central in $E$, yield that
    \begin{align*}
        \left|\dist_{W}(k^{-1}hx_0, k^{-1}ghx_0)-\dist_{W}(hx_0, ghx_0)\right|&\le \dist_{W}(hx_0, k^{-1}hx_0)+\dist_{W}(ghx_0, k^{-1}ghx_0)\\
        &=\dist_{h^{-1}W}(x_0, k^{-1}x_0)+\dist_{(gh)^{-1}W}(x_0, k^{-1}x_0)\\
        &\le 2\sup_{V\in \frakS}\dist_{V}(x_0, k^{-1}x_0).
    \end{align*}
    The quantity $M\coloneq 2\sup_{V\in \frakS}\dist_{V}(x_0, k^{-1}x_0)$ is bounded in terms of $\dist_{\cal X}(x_0, k^{-1} x_0)$, since coordinate projections are uniformly coarsely Lipschitz. We just proved that projection distances in $W$ and in $kW$ must agree up to an error of $M$. \\
    Now we show that $kW=W$, by excluding every other possibility.
    \begin{itemize}
        \item If, say, $W\propnest kW$ and $k$ has infinite order, then we would get an infinite chain of properly nested domains $W\propnest kW\propnest k^2W\ldots$, contradicting finite complexity; if instead $k$ has order $n<\infty$, then the above chain would eventually yield that $W\propnest W$, a contradiction. The same argument excludes that $kW\propnest W$.
        \item If $kW\orth W$, since $\C W$ is unbounded, the partial realisation axiom~\ref{axiom:partial_realisation} can be used to find an element $g\in E$ such that $\dist_W(x_0,gx_0)\le \delta$, while $\dist_{kW}(x_0,gx_0)\ge M+\delta+1$. This would contradict the fact that $|\dist_W(x_0,gx_0)-\dist_{kW}(x_0,gx_0)|\le M$.
        \item Finally, suppose $W\trans kW$. Since $\C W$ is unbounded, we can find elements $g,h\in E$ such that $$\min\{\dist_W(\rho^{kW}_W, hx_0), \dist_W(\rho^{kW}_W, ghx_0), \dist_W(hx_0, ghx_0)\}\ge M+3\delta+1.$$ 
        However the consistency axiom~\ref{axiom:consistency} would give that 
        $$\dist_{kW}(hx_0, ghx_0)\le \dist_{kW}(hx_0, \rho^{W}_{kW})+\dist_{kW}(\rho^{W}_{kW}, ghx_0)+\diam\rho^{W}_{kW}\le 3\delta, $$
        again contradicting that $|\dist_W(hx_0,ghx_0)-\dist_{kW}(hx_0,ghx_0)|\le M$.
    \end{itemize}

    \eqref{item:K_acts_unif_bdd} Let $\cal U=\bigcup_{i-1}^n \Bigdom{z_i}$ and $\wfS=\frakS-\cal U$. Notice first that $E$ permutes each $\Bigdom{z_i}$, since for every $h\in E$ and $x\in \cal X$ we have
    $$\diam\pi_{hU}(\langle z_i\rangle\cdot x)=\diam\pi_{U}(h^{-1}\langle z_i\rangle\cdot x)=\diam\pi_{U}(\langle z_i\rangle h^{-1}\cdot x)=\infty.$$
    Thus there exists a finite-index, normal subgroup $E'\le E$ fixing every $U\in \cal U$. Furthermore, again because $K$ is central, every element in $E'$ pointwise fixes the ideal endpoints of an axis for $z_i$ in $\C U$ whenever $U\in \Bigdom{z_i}$, so $\C U$ is a quasiline by \cite[Proposition 3.3]{ANS:UEG}. Now equip $E'$ with the same HHG structure as $E$, which we can do as the $E'$-action on $\frakS$ is still cofinite. If we apply \cite[Lemma 3.1, Proposition 3.2, and Proposition 3.4]{ANS:UEG} to $(E',\frakS)$, we get that that each $U\in \cal U$ is orthogonal to any other unbounded domain in $\frakS$; in particular, any two $U,U'\in \cal U$ are orthogonal. 
    \\
     We now move to the proof of \eqref{item:K_acts_unif_bdd}. We first prove that a specific $K$-orbit in $\cal X$ has uniformly bounded projections to all $W\in \wfS$. Complete $\cal U$ to a maximal collection $\cal V$ of pairwise orthogonal, unbounded domains, each of which does not contain other unbounded domains. Moreover, let $x_0\in \cal X$ be any point given by the partial realisation axiom~\ref{axiom:partial_realisation} applied to the above collection, with any choice of coordinates $p_i\in\C V_i$. Every other unbounded domain $W\in \wfS$ must contain, or be transverse to, some $V\in \cal V$, so $\pi_W(x_0)$ is uniformly close to $\rho^{V}_W$. But $K$ fixes both $V$ and $W$ by (1), so every $K$-translate of $x_0$ projects uniformly close to $\rho^{V}_W$. This shows that $\diam\pi_W(K\cdot x_0)$ is uniformly bounded whenever $W\in \wfS-\cal V$. Furthermore, for every $V\in \cal V-\cal U$, $\langle z_1\rangle\cdot x_0$ has bounded projection to $\C V$; then $\langle z_1, z_2\rangle\cdot x_0=\langle z_2\rangle\cdot (\langle z_1\rangle\cdot x_0)$ has bounded projection to $\C V$ as well, and analogously we get that $\diam_V K\cdot x_0<\infty$. Since there are finitely many $V_i$, we just proved that $\sup_{U\in \wfS}\diam\pi_U(K\cdot x_0)$ is bounded by some constant $C'$.
     \\
    Now notice that
    \begin{align*}
        \sup_{g\in E',\, U\in \wfS}\diam\pi_U(K\cdot gx_0)&=\sup_{g\in E',\, U\in \wfS}\diam\pi_{g^{-1}U}(K\cdot x_0)\\
        &=\sup_{U\in \wfS}\diam\pi_U(K\cdot x_0)\le C',
    \end{align*}
    where we used that every $g\in E'$ commutes with $K$ and preserves $\wfS$. Since $E'$ acts coboundedly on $\cal X$, and since projections are uniformly coarsely Lipschitz, the above bound implies Item (2).
\end{proof}

\subsection{Bounded central extensions and hierarchical hyperbolicity}
We now move to the cohomological characterisation of when a central extension of a HHG, with finitely generated kernel, is a HHG.

\begin{thm}\label{thm:HHG_iff_bounded_abelian_ker}
    Let $G$ be a HHG, and let  $1 \to K \to E \to G \to 1$ be a central extension with finitely generated kernel. Then $E$ is a HHG if and only if the extension is bounded.
\end{thm}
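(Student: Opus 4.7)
The plan is to handle the two implications separately. The direction from $E$ being a HHG to the extension being bounded is the first conclusion of Theorem~\ref{thm:centralquot_is_HHS}, which is established in the next subsection, so I would just invoke it. For the converse, that a bounded central extension of a HHG is a HHG, I would induct on the torsion-free rank $n$ of the finitely generated abelian kernel $K \cong \Z^n \oplus T$, with $T$ the finite torsion part.

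The base case is $n = 0$, in which $K = T$ is finite. Here I would let $E$ act on the same HHS $\cal X$ that $G$ acts on, via the quotient $\pi\colon E \to G$. Since $K$ is finite, the action remains metrically proper and cobounded. Keeping the same index set $\frakS$, the same coordinate spaces, relations, and relative projections, and precomposing the coordinate projections with $\pi$, one obtains a HHS structure on $\cal X$ that $E$ acts on: all HHS axioms are statements about domains and coordinate spaces, which are unchanged, while the equivariance conditions of Definition~\ref{defn:HHG} factor through the action of $G$.

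For the inductive step with $n \geq 1$, I would pick a $\Z$-summand $L \leq K$. Corollary~\ref{cor:decomposing:bounded:extensions} ensures that both
\begin{equation*}
    1 \to L \to E \to E/L \to 1 \quad \text{and} \quad 1 \to K/L \to E/L \to G \to 1
\end{equation*}
are bounded. The second extension has kernel of torsion-free rank $n-1$, so by the inductive hypothesis $E/L$ is a HHG. The first extension is then a bounded central $\Z$-extension of a HHG, and one can apply \cite[Theorem 5.14]{uniform_undistortion} to conclude that $E$ is a HHG. As sketched in the outline, the HHG structure on $E$ keeps one domain per domain of $E/L$ (with projections factoring through $E \to E/L$) and adds a single quasiline built via the Abbott--Balasubramanya--Osin construction from the quasimorphism $E \to \Z$ provided by Proposition~\ref{bounded via qm:abelian_ker}; this quasiline detects the central $L$-factor.

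The main obstacle is packaged inside the $K = \Z$ step cited from \cite{uniform_undistortion}: one has to check that the added quasiline interacts with every other domain in a way compatible with the full list of HHG axioms (in particular consistency, bounded geodesic image, and partial realisation), and that the resulting structure is genuinely $E$-equivariant. Once this black box is granted, the remainder of the argument is essentially cohomological bookkeeping based on the material of Section~\ref{sec:background}, with Corollary~\ref{cor:decomposing:bounded:extensions} doing all the work in the inductive reduction.
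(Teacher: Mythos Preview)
Your treatment of the $(\Leftarrow)$ direction matches the paper's argument essentially verbatim: induction on the free rank via Corollary~\ref{cor:decomposing:bounded:extensions}, with the finite-kernel base case handled by pulling back the HHS along $\pi$, and the inductive step reduced to the $\Z$-kernel case of \cite[Theorem 5.14]{uniform_undistortion}. Nothing to add there.

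The $(\Rightarrow)$ direction, however, has a circularity problem. You propose to invoke the first conclusion of Theorem~\ref{thm:centralquot_is_HHS}, but in the paper that conclusion is \emph{not} proved independently: the proof of Theorem~\ref{thm:centralquot_is_HHS} opens with the sentence ``The proof of $(\Rightarrow)$ of Theorem~\ref{thm:HHG_iff_bounded_abelian_ker} did not use that $G$ is a HHG, so it shows that if $E$ is HHG, then the extension is bounded.'' In other words, the Busemann-quasimorphism argument lives \emph{here}, and Theorem~\ref{thm:centralquot_is_HHS} points back to it. Your proposal therefore leaves this direction without a proof. What is needed is: for each infinite-order central $z$, locate the finite set $\Bigdom{z}$ of domains on which $\langle z\rangle$ has unbounded orbits; argue (using centrality and coboundedness) that each such $\C U$ is a quasiline on which a finite-index subgroup acts without swapping ends; take the associated Busemann quasimorphism and promote it to a quasimorphism on all of $E$ (via \cite[Lemma 3.4]{short_HHG:I}); then run a triangular Gram--Schmidt on a basis of the $\Z^n$ part of $K$ to assemble a quasihomomorphism $E\to K$ restricting to the identity on $K$, and conclude by Proposition~\ref{bounded via qm:abelian_ker}. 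None of this is hard, but it is the actual content of $(\Rightarrow)$ and must appear somewhere before Theorem~\ref{thm:centralquot_is_HHS} can cite it.
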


\begin{rem} As we shall expand upon in the proof, the fact that a bounded $\Z$-central extension of a HHG is a HHG was essentially proven in \cite[Proposition 5.14]{uniform_undistortion}.  In turn, the argument there refined \cite[Corollary 4.3]{HRSS_3manifold}, which dealt with the case with hyperbolic base and only relied on the fact that every $\Z$-central extension of a hyperbolic group is bounded \cite{neumannreeves}. As we have seen in Example~\ref{Heisenberg}, this is no longer automatically true for general HHG. Even assuming acylindrical hyperbolicity of the base is not enough: indeed $H^2(\mathbb{Z}^2 \ast \mathbb{Z}; \mathbb{Z}) \cong H^2(\mathbb{Z}^2; \mathbb{Z})$, which produces an unbounded class. The argument is the same as Example \ref{Heisenberg}, using for instance that the comparison map $H^2_b(\mathbb{Z}^2 \ast \mathbb{Z}; \mathbb{R}) \to H^2(\mathbb{Z}^2 \ast \mathbb{Z}; \mathbb{R})$ is trivial \cite[Example 4.7]{kevin}.
\end{rem}

\begin{rem}
    \label{rem:QITB}
    It is natural to ask whether one can also characterise the extension being just HHS rather than HHG. A sufficient, and potentially necessary condition for this is that the extension is quasi-isometrically trivial, which turns out to be equivalent to the Euler class of the extension being \emph{weakly bounded}, in the sense of \cite{neumannreeves}. For many groups (for instance right-angled Artin groups), weakly bounded classes are bounded \cite{frigsisto}, and this is called property QITB (that is, quasi-isometrically trivial implies bounded). However, there are examples of quasi-isometrically trivial extensions which are not bounded \cite{frigsisto,ascarimilizia}. To summarise, two intriguing questions arise from this:
    \begin{enumerate}
        \item If a central extension of an HHG is an HHS, is the extension quasi-isometrically trivial?
        \item\label{do_hhg_have_QITB} Do HHGs satisfy QITB? Equivalently, in view of Theorem~\ref{thm:HHG_iff_bounded_abelian_ker}, is a quasi-isometrically trivial extension of a HHG also a HHG?
    \end{enumerate}
\end{rem}

\begin{proof}[Proof of Theorem~\ref{thm:HHG_iff_bounded_abelian_ker}]
Let $K \cong\Z^n\times T$ for some $n\ge 0$ and $T$ finite.

$(\Leftarrow)$ Assume that the extension $E \to G$ is bounded. A finite extension of a HHG is a HHG via the geometric action on the same space (this is true even when the extension is not central); thus we can reduce to the case where $T=\{0\}$. Moreover, by Corollary \ref{cor:decomposing:bounded:extensions} and induction on $n$, we can also assume that $K\cong \Z$. 

Now, since $E\to G$ is bounded, Proposition~\ref{bounded via qm:abelian_ker} produces a quasimorphism $\phi\colon E\to K$ which is the identity on $K$. In turn, by \cite[Lemma 4.15]{ABO} there exists a Cayley graph $L$ for $E$ such that $\phi\colon L\to \R$ is a quasi-isometry. In particular $L$ is a quasiline on which $K$ acts loxodromically. Then the collection $\{E,G,L\}$ satisfies the assumptions of \cite[Proposition 5.14]{uniform_undistortion}, which equips $E$ with a HHG structure.

\medskip
$(\Rightarrow)$ Now assume that $E$ admits a HHG structure, coming from the action on some HHS $(\cal X,\frakS)$. Our goal is to produce a quasihomomorphism $\Psi\colon E\to K$ which is the identity on $K$, and then we will conclude by Proposition~\ref{bounded via qm:abelian_ker}. Since this is obvious if $K$ is finite, let us assume for the rest of the proof that $n \geq 1$.

Fix a basepoint $x\in \cal X$ and a basis $z_1,\ldots,z_n$ for $K$, which we order in such a way that the first $m$ elements generate a subgroup isomorphic to $\Z^m$ and the other elements have finite order. Let $\cal U=\bigcup_{i=1}^m\Bigdom{z_i}$, and let $E'$ be the finite-index, normal subgroup of $E$ fixing each $U\in \cal U$, as in the proof of Lemma~\ref{lem:central_action}. Notice that, by the same Lemma, $K\le E'$; furthermore, $(\cal X, \frakS)$ is also a HHG structure for $E'$.

By arguing as in \cite[Lemma 4.20]{ABO}, one can show that $\C U$ is a quasiline for every $U\in \cal U$. Indeed, $K\le E'$ has unbounded orbits on $\C U$, so $E'$ does not act elliptically; moreover, the action is not parabolic as it is cobounded (this is because $\pi_U$ is coarsely surjective). By the classification of group actions on hyperbolic spaces (which dates back to Gromov \cite{gromov}, see e.g. \cite[Theorem 4.2]{ABO} for a more modern statement), there must be a loxodromic element $h\in E'$. As $K$ commutes with $h$, it must fix the endpoints $h^+,h^-\in\partial \C U$. In particular, the action of $K$ is not parabolic, so some $z\in K$ is a loxodromic isometry of $\C U$ with endpoints $h^\pm$. Again, this means that every element in $E'$, which commutes with $z$, must fix both $h^+$ and $h^-$; hence, as the action is cobounded, $\C U$ must be a quasiline. 

The above argument also shows that $E'$ acts on $\C U$ coboundedly and without inversions. In particular, we can $E'$-equivariantly replace each $\C U$ with a Cayley graph for $E'$, so that the projection $E'\to \C U$ is the identity at the level of vertices.

Now let $x_0\in \C U$ correspond to the identity element of $E$. Let $\phi_U\colon E'\to \R$ be the Busemann quasimorphism associated to the $E'$-action on $\C U$, as in Example~\ref{ex:busemann}. Since the absolute value of $\phi_U(g)$ coarsely coincides with the distance in $\C U$ from the basepoint $x_0$, we see that $\phi_U$ is a quasi-isometry when seen as a map from $\C U$ to $\R$.

Going back to $K$, Lemma~\ref{lem:central_action} tells us $K$-orbits are uniformly bounded in every domain outside $\cal U$. By the Distance Formula \cite[Theorem 4.5]{HHS_II}, this means that orbit maps quasi-isometrically embed $K$ inside the product $\prod_{U\in \cal U} \C U$; in turn, since every $\phi_U\colon \C U\to \R$ is a quasi-isometry, we get that the map $\Phi\colon K\to \R^{|\cal U|}$ mapping $k\in K$ to $(\phi_U(k))_{U\in \cal U}$ is a quasi-isometry. Since every homogeneous quasimorphism on the Abelian group $K$ is a homomorphism \cite[Corollary 2.12]{frigerio}, the image of $\Phi$ must span an $m$-dimensional subspace. Hence, if we identify $\langle z_1,\ldots, z_m\rangle$ with $\Z^m\le \R^m$, there is a linear map $M\colon \R^{|\cal U|}\to \R^m$ such that $\Psi\coloneq M\circ \Phi\colon E'\to \R^m$ is the identity on $\Z^m$. Notice that $\Psi$ is a quasihomomorphism as it is a composition of the quasihomomorphism $\Phi$ and a linear map; however, it is not yet the quasihomomorphism we are looking for, since it is only defined on $E'$.

To solve the above problem, let $g_1,\ldots,g_r\in E$ be representatives for the cosets of $E'$, and define a quasihomomorphism $\Theta\colon E'\to \R^m$ by mapping $h\in E'$ to
$$\Theta(h)=\frac{1}{r}\sum_{i=1}^r \Psi(g_ihg_i^{-1}).$$
Notice that $\Theta$ is still the identity on $\Z^m$, since the latter is central in $E$; furthermore, by arguing as in e.g. \cite[Lemma 3.4]{short_HHG:I}, one can see that $\Theta$ is now almost invariant under conjugation by $E$, so let $D>0$ be such that, for every $e\in E$ and $h\in E'$, $|\Theta(h)-\Theta(ehe^{-1})|\le D$. 

Finally, let define $\Xi\colon E\to \R^m$ by setting $\Xi(g_ih)=\Theta(h)$ for every $h\in E'$ and $i=1,\ldots,r$. We now prove that $\Xi$ is a quasihomomorphism. Let
$$\cal E=\{e\in E'\mid \exists 1\le i,j,k\le r\mbox{ s.t. }g_ig_j=g_k e\}.$$
Then, for every $g,g'\in \{G_1,\ldots, g_r\}$ and every $h,h'\in E'$, we have
$$ghg'h'=g''e((g')^{-1}hg')h',$$
where $g''\in \{g_1,\ldots,g_r\}$ and $e\in \cal E$ are such that $gg'=g''e$. Hence, if the defect of $\Theta$ is $C>0$, we have that
\begin{align*}
    &\left |\Xi(gh)+\Xi(g'h')-\Xi(ghg'h')\right|\\
    =&\left |\Theta(h)+\Theta(h')-\Theta(e((g')^{-1}hg')h')\right|\\
    \le&\left |\Theta(h)+\Theta(h')-\Theta(e)-\Theta((g')^{-1}hg')-\Theta(h')\right|+2C\\
    \le&\left |\Theta(h)+\Theta(h')-\Theta(e)-\Theta(h)-\Theta(h')\right|+2C+D\\
    \le&|\Theta(e)|+4C+D\\
    \le&\max_{e\in \cal E}|\Theta(e)|+4C+D.
\end{align*}
Up to taking the integer part, we can assume that $\Xi$ takes values in $\Z^m$, and up to a further bounded modification we can assume that $\Xi|_K$ is the identity. As a bounded modification of a quasihomomorphism with abelian target is still a quasihomomorphism, $\Xi$ satisfies the requirements of Proposition~\ref{bounded via qm:abelian_ker}, and the proof is complete.
\end{proof}

Next, we prove a statement in the opposite direction:

\begin{thm}\label{thm:centralquot_is_HHS}
    Let $1 \to K \to E \to G \to 1$ be a central extension with finitely generated kernel. If $E$ is a HHG, then the extension is bounded. Moreover, if $E$ has a HHG structure with clean containers, there exists a finite-index subgroup $E'\le E$ containing $K$ such that $E'/K$ is a HHG.
\end{thm}

\begin{proof}
    The proof of $(\Rightarrow)$ of Theorem \ref{thm:HHG_iff_bounded_abelian_ker} did not use that $G$ is a HHG, so it shows that if $E$ is HHG, then the extension is bounded. We now focus on proving the ``moreover'' part, whose proof we split into several steps.

\par\medskip\textbf{Step 1: choice of the finite-index subgroup.} Consider a HHG structure $(\mathcal X, \frakS)$ for $E$ with clean containers. Fix a basis $\{z_1,\ldots, z_n\}$ for $K$, and let $$\cal U=\bigcup_{i=1}^n \Bigdom{z_i}.$$ As in the proof of Theorem~\ref{thm:HHG_iff_bounded_abelian_ker}, $E$ acts on each $\Bigdom{z_i}$, so let $E'$ be the finite-index subgroup of $E$ that fixes each $U\in \cal U$; notice that $K\le E'$ by Lemma~\ref{lem:central_action}. Since $E'$ has finite index, $(\cal X,\frakS)$ is a HHG structure for $E'$, so we can identify $\cal X$ with a Cayley graph for $E'$ with respect to a finite generating set. For later purposes, let $\delta$ be a HHG constant for the structure.

 \par\medskip
\textbf{Step 2: removing unnecessary bounded domains.} 
In this step we restrict the domain set of the HHG structure. Firstly, let 
$$\frakS'=\{W\in \frakS\mid \exists V\nest W\mbox{ s.t. } \mathcal C V \mbox{ unbounded}\}.$$
We claim that $(E',\frakS')$ is again a HHG structure with clean containers, with the same domains and coordinate spaces. We point out that $\frakS'$ is $E'$-invariant and contains all unbounded domains; therefore, by inspection of Definition~\ref{defn:HHS}, removing some bounded domains can only affect the existence of containers and the validity of the large link axiom, so we must check that both still hold in $\frakS'$.

    \begin{itemize}
\item \textbf{Clean containers:} Let $V,W\in \frakS'$ be such that $V\propnest W$, and there exists $U\in \frakS'$ which is nested in $W$ and orthogonal to $V$. By the containers axiom in $\frakS$, there exists $T\in \frakS$ such that $T\propnest W$, $T\orth V$, and $T$ contains all $U$ as above. Since $U\in \frakS'$, it contains an unbounded domain, and in turn this means that $T\in \frakS'$. Hence $T$ is the container for $V$ inside $W$ in $\frakS'$.

\item \textbf{Large links:} By Remark~\ref{rem:passingup}, it is enough to check the passing up axiom~\ref{axiom:passingup}. In turn, since $\frakS'\subseteq \frakS$ has the bounded domain dichotomy, by Remark~\ref{rem:checking_passingup} it is enough to check the passing up axiom for unbounded domains, which already held in  $\frakS$.
\end{itemize}

Now, by \cite[Lemma 3.1 and Proposition 3.4]{ANS:UEG}, each $U\in \cal U$ is orthogonal to any other unbounded domain, so in $\frakS'$ there are no domains which are properly nested in any $U\in \cal U$. Let $\frakS''$ be obtained from $\frakS'$ by removing all domains which are transverse to some $U\in \cal U$, so that $\frakS''$ is an $E'$-invariant collection which still contains all unbounded domains. As above, $(E',\frakS'')$ is a HHG structure if we check that the clean containers axiom still holds. To this extent, let $V\propnest W$ be domains in $\frakS''$ for which a container is needed; in particular $W\not\in \cal U$ as the domains in $\cal U$ are all $\nest$-minimal. Let $T\in \frakS'$ be the clean container for $V$ in $W$. Suppose by contradiction that $T\trans U$ for some $U\in \cal U$. Then $U\propnest W$, because if $U\orth W$ we would have that $T\orth U$ as well. Furthermore, $U\orth V$, because if $U\nest V$ then $T$ would be orthogonal to $U$ (here we are crucially using that $T\orth V$ because $\frakS'$ has clean containers). However this is a contradiction, as then $T$ should contain $U$.

\par\medskip \textbf{Interlude: Reduction to cyclic kernel.}
In view of the above discussion, we shall say that a collection $\{h_1,\ldots, h_n\}$ of elements of a HHG $(H,\frakS)$ is \emph{clean} if for every $W\in \frakS$, $i=1,\ldots,n$, and $U\in \Bigdom{h_1}$, either $U\nest W$ or $U\orth W$.
\\
Now let $z=z_1$ and $Z=\langle z\rangle$. Our goal is to prove that $E'/Z$ admits a HHG structure $\ofS$ such that the image of $\{z_2,\ldots, z_n\}$ is clean in $(E'/Z,\ofS)$. Therefore, if in the following arguments we are careful to never use that $\frakS''$ has clean containers, but only that $\{z_1,\ldots,z_n\}$ is clean in $(E',\frakS'')$, then we will be able to conclude by induction that $E'/K$ is a HHG, thus proving Theorem~\ref{thm:centralquot_is_HHS}

    \par\medskip
    \textbf{Step 3: reduction to a single big domain.}
    In this step we replace some coordinate spaces, while keeping the domain set $\frakS''$, to get a new HHG structure where $z$ has at most one big domain (this step is unnecessary if $z$ has finite order). Let $\Bigdom{z}=\{U_1,\ldots,U_j\}$, and for every $i=1,\ldots, j$ let $\phi_i\coloneq E'\to \R$ be the Busemann quasimorphism for the action on $\C U_i$. We first claim that these quasimorphisms are linearly independent. Indeed, if, say, $\phi_1$ was a linear combination of the others, then there would be some function $f\colon \R_{\ge0}\to \R_{\ge0}$ such that $\dist_{U_1}(g,1)\le f(\dist_{U_2}(g,1), \ldots, \dist_{U_j}(g,1))$ for every $g\in E'$ (here we used that, as argued in Example~\ref{ex:busemann}, the absolute value of each quasimorphism coarsely coincides with the distance in the corresponding quasiline). However the elements of $\Bigdom{z}$ are pairwise orthogonal, so the partial realisation axiom~\ref{axiom:partial_realisation} implies that $E'$ acts coboundedly on the product $\prod_{i=1}^j\C U_i$. In particular, one can find a sequence of $g_n\in E'$ for which $\dist_{U_1}(g_n,1)\to\infty $ while the other projections are uniformly bounded, contradicting the existence of $f$. 

    Next, set $\psi_1=\phi_1$, and for every $ 2\le i\le j$ let $\psi_i=\phi_i-\frac{\phi_i(z)}{\phi_1(z)}\phi_1$, which is non-trivial by linear independence. For every $1\le i\le j$ let $L_i$ be a Cayley graph for $E'$ such that $\psi_i\colon L_i\to \R$ is a quasi-isometry, as constructed in \cite[Lemma 4.15]{ABO}. Notice that $\psi_i(z)=0$ whenever $i\ge 2$, so $Z$ now acts elliptically on all quasilines excluding $L_1$. 

    Now, replacing $\C U_1$ by $L_1$ gives a new HHG structure for $E'$, since the two spaces are $E'$-equivariantly quasi-isometric. We then replace each $\C U_i$ by the corresponding $L_i$, with projection given by the identity map $\cal X\to L_i$ as both are Cayley graphs for $E'$. Notice that it is not necessary to define new relative projections, as no domain is transverse to, or nested in, any $U_i$.
    
    We claim that the above replacement procedure gives a HHG structure for $E'$, where $z$ will then have a single big domain. All requirements only involving the relations on $\frakS''$, or the projections between domains in $\frakS''$, still hold by construction, so we focus on the axioms which impose restrictions on the $L_i$.
    
    \begin{itemize}
        \item \textbf{Projections~\ref{axiom:projections}.} The projection $\cal X\to L_i$ is surjective and Lipschitz, as $\cal X$ is a Cayley graph for $E'$ with respect to a finite generating set.
        \item \textbf{Consistency~\ref{axiom:consistency}.} Since $\{z\}$ is clean in $\frakS''$, no element of $\frakS''$ is either transverse to, or nested in, any $U\in \Bigdom{z}$. Hence, both consistency inequalities only involve domains from $\frakS''-\Bigdom{z}$, and therefore already held in the original structure.
        \item \textbf{Bounded geodesic image~\ref{axiom:bounded_geodesic_image}.} Since every $U_i$ is orthogonal to every other unbounded domain, there are no pairs of nested unbounded domains involving $U_i$; all other instances of the axiom already appeared in the original structure.
        \item \textbf{Large links~\ref{axiom:large_link_lemma}.} As we only replaced finitely many coordinate spaces, the axiom still holds (possibly after replacing $\delta$ by a bigger constant).
        \item \textbf{Partial realisation~\ref{axiom:partial_realisation}.} Let $\cal V= \{(V_i,q_i\in \C V_i)\}_{i=1,\ldots,m}$ be a collection of pairwise orthogonal domains, each with a point in the associated coordinate space. Suppose first that no element in $\cal V$ belongs to $\Bigdom{z}$, and let $x\in \cal X$ be a realisation point for $\cal V$ in the original HHG structure. Since no element of $\cal V$ is transverse to, or nested in, any element of $\Bigdom{z}$, $x$ is also a realisation point in the new structure, and we are done.

        Thus, suppose $\Bigdom{z}$ intersects the support of $\cal V$ nontrivially. If $U_1$ appears in $\cal V$ then let $g_1$ be the coordinate $q_1\in L_1$ that we have to realise; otherwise let $g_1=1\in E'$. Whenever $U_i\in \cal V$ for some $i\ge 2$ let $q_i\in L_i$ the coordinate to realise, choose $g_i\in E'$ such that $\phi_i(g_i)=\psi_i(q_i)+\psi_1(g_1)$, and let $p_i=\pi_{\C U_i}(g_i)$.
        By the partial realisation axiom for the original structure, there exists some $x\in \cal X$ realising the collection $$\{(U_i, p_i\in \cal C U_i)\}_{U_i\in (\Bigdom{z}-\{U_1\})\cap \cal V}\cup \{(V_i,q_i\in \C V_i)\}_{V_i\in \cal V-(\Bigdom{z}-\{U_1\})},$$
        and we claim that $x$ is uniformly close to $p_i$ in each $L_i$. Since $x$ is a realisation point, for every $i=1,\ldots,j$ we have that $\dist_{\C U_i}(g_i,x)\le \delta$, so that $\phi_i(x)$ and $\phi_i(g_i)$ are uniformly close. In turn, this means that $\psi_i(x)$ is uniformly close to $\psi_i(p_i)$ for all $i$, and therefore $\dist_{L_i}(p_i,x)$ is uniformly bounded, as required.
        \item \textbf{Uniqueness~\ref{axiom:uniqueness}} Suppose $x,y\in \cal X$ have uniformly close projections to all domains in the new structure. If we show that their projections to each $\C U_i$ are also uniformly bounded then $x$ and $y$ are uniformly close, by the uniqueness axiom in the original structure. Up to the group action we can assume that $y=1$, so that $\dist_{L_i}(1,x)$ coarsely coincides with the absolute value of $\psi_i(x)$. Since all distances in the $L_i$ are uniformly bounded, so are the absolute values of $\psi_i$, hence of $\phi_i$, and again this means that $\dist_{\C U_i}(1,x)$ is uniformly bounded for all $i$. 
        \item \textbf{HHG structure.} All projections (old and new) are $E'$-equivariant, so we built a hierarchically hyperbolic \emph{group} structure.
    \end{itemize}
    Before proceeding, we point out that the collection $\{z_2,\ldots, z_n\}$ is still clean in the new structure. This is because we did not change the projections to any domain outside $\cal U$; hence, for every $i=2,\ldots,n$, $\Bigdom{z_i}$ must still be contained inside $\cal U$ (though it might differ from what it was before Step 3).

    \par\medskip\textbf{Step 4: HHG structure of the quotient.} 
    We are finally ready to prove that $E'/Z$ is a HHG, with the structure that we now describe. If $z$ has infinite order let $U_z$ be the unique big domain for $z$ in the new structure, and let $\wfS=\frakS''-\{U_z\}$; otherwise let $\wfS=\frakS''$. Let $\cal Y=\cal X/Z$ and $\ofS=\wfS/Z$. Given a domain $W\in \wfS$, let $[W]$ be its image in $\ofS$. Two domains $[U], [V]\in \ofS$ are orthogonal (resp. nested) if they admit orthogonal (resp. nested) representatives $U\in [U]$ and $V\in [V]$. Set 
    $$\cal C [W]=\left(\bigcup_{W\in [W]}\cal C W\right)/K,$$ and for any representative let $q_W\colon \cal C W\to \cal C [W]$ be the quotient projection. This map is $1$-Lipschitz; moreover, it is a uniform quality quasi-isometry, because $K$-orbits are uniformly bounded in $\cal C W$ by Lemma~~\ref{lem:central_action}. In particular, the spaces $\cal CW$ are uniformly hyperbolic.
    
    Given $[W]$, if $\cal C W$ is bounded then so is $\cal C [W]$, and projections can be defined arbitrarily. Otherwise there is a single representative $W\in [W]$, and given $ [x]\in \cal Y$ let $$\pi_{[W]}([x])=q_W\left(\bigcup_{x\in [x]} \pi_W(x)\right).$$ Similarly, for every $[V]\in \ofS$ which is properly nested in, or transverse to, $[W]$, set 
    $$\rho^{[V]}_{[W]}=q_W\left(\bigcup_{V\in [V]} \rho^V_W\right).$$ 
    We now check that $(\cal Y, \ofS)$ is a HHG structure for $E'/Z$.
    \begin{itemize}
        \item \textbf{Projections~\ref{axiom:projections}.} For every $[W]\in \ofS$, $\pi_{[W]}$ is a $\delta$-coarsely onto, $\delta$-coarse map, as the quotient projection $q_W\colon \C W\to \C[W]$ is $1$-Lipschitz and surjective. Moreover, given $[x],[y]\in \cal Y$ let $x\in [x]$ and $y\in [y]$ realise the distance between $[x]$ and $[y]$, so that \[\dist_{[W]}([x],[y])\le \dist_{W}(x,y)\le \delta\dist_{\cal X}(x,y)+\delta=\delta\dist_{\cal Y}([x],[y])+\delta.\]
        
         \item \textbf{Nesting~\ref{axiom:nesting}.} By construction, the unique maximal element of $\ofS$ is $[S]$. Moreover, whenever $[V]\propnest [W]$ and $\cal C W$ is unbounded, we have that $\diam\rho^{[V]}_{[W]}\le \diam \bigcup_{k\in Z}\rho^{kV}_{W}$, and the latter is uniformly bounded since $Z$-orbits in $\cal C W$ are uniformly bounded.
        
        \item \textbf{Finite complexity~\ref{axiom:finite_complexity}.} By how nesting is defined, every chain $[U_1]\propnest\ldots\propnest [U_k]$ in $\ofS$ lifts to a chain $U_1\propnest \ldots\propnest U_k$ in $\wfS\subseteq \frakS''$, and the finite complexity axiom for $(\cal X, \frakS'')$ uniformly bounds the length of the latter chain.
        
        \item \textbf{Orthogonality~\ref{axiom:orthogonal}.} We first prove that $\nest$ and $\orth$ are pairwise exclusive. Indeed, suppose that $[V],[W]\in \ofS$ admit representatives $V,V'\in [V]$ and $W,W'\in [W]$ such that $V\orth W$ and $W'\propnest V'$. Up to the $Z$-action we can assume that $V=V'$. But then, since $W\in \wfS\subseteq \frakS''$, there exists $U\nest W$ with $\cal C U$ unbounded, which is fixed by $Z$ and is therefore nested in $W'$. This contradicts the fact that, in $\frakS''$, orthogonal domains have no common nested domain.
        \\
        Regarding the second requirement, let $[U],[V],[W]\in \ofS$ be such that $[U]\propnest [V]$ and $[V]\orth [W]$. As above, we can find representatives $U\propnest V$ and $V\orth W$, so we get that $U\orth W$ and therefore $[U]\orth [W]$.

        \item \textbf{Containers~\ref{axiom:containers}.} Let $[V]\propnest [W]$, and let $[V']\nest [V]$ be such that $\cal C V'$ is unbounded. Let $\cal Q=\{[Q]\propnest [W]\mid [Q]\orth[V]\}$, and notice that every element of $\cal Q$ is orthogonal to $[V']$ as well. Fix a representative $W\in [W]$, and for every $[Q]\in \cal Q$ let $Q\in [Q]$ be nested in $W$. Since $V'$ is the unique representative of $[V']$, it must be nested in $W$ and orthogonal to every $Q$. Thus the container axiom for $\frakS''$ produces a domain $T\propnest W$ which contains every $Q$. Hence $[T]\propnest [W]$ and contain all $[Q]\in \cal Q$, so it is a container for $[V]$ inside $[W]$.

        \item \textbf{Transversality~\ref{axiom:transversality}.} Uniform boundedness of projection points follows as for the nesting axioms.
        
        \item \textbf{Consistency, bounded geodesic image, and large links~\ref{axiom:consistency}-\ref{axiom:large_link_lemma}} Since $q_W$ is a uniform quasi-isometry for every $W\in \ofS$, all three axioms hold as they were true in $(\cal X, \frakS'')$, plus the fact that we defined (relative) projections via the original (relative) projections.
        
        \item \textbf{Partial realisation~\ref{axiom:partial_realisation}.} Let $[V_1],\ldots,[V_k]\in \ofS$ be pairwise orthogonal, and for every $i$ let $p_i\in \C [V_i]$. For every $i$ let $r_i\in q_{V_i}^{-1}(p_i)$, and let $x\in \cal X$ realise the collection $\{r_i\}_{i=1,\ldots, k}$. It is easy to see that $[x]$ realises $\{p_i\}_{i=1,\ldots, k}$ in $\cal Y$.
        
        \item \textbf{Uniqueness~\ref{axiom:uniqueness}.} Let $[x],[y]\in\cal Y$, and let $r>0$ be a constant such that $\dist_{[W]}([x],[y])\le r$ for all $[W]\in\ofS$. Then any $x\in [x]$ and $y\in [y]$ have uniformly close projections to all $W\in \wfS$ (again, because every $q_W$ is a uniform quasi-isometry). Furthermore, since $Z$ acts coboundedly on $\C U_z$, we can replace $y$ by a $K$-translate and uniformly bound $\dist_{U_z}(x,y)$ as well; notice that this replacement preserves the distance between $x$ and $y$ to every $W\in \wfS$ up to a bounded error, since $Z$-orbits have uniformly bounded projections to $\C W$. Then the uniqueness axiom for $(\cal X, \frakS'')$ yields that $x$ and $y$ are uniformly close in $\cal X$, and therefore $\dist_{\cal Y}([x],[y])\le \dist_{\cal X}(x,y)$ is uniformly bounded.        
        \item \textbf{HHG structure.} Since $E'$ acted metrically properly and coboundedly on $\cal X$, then so does $E'/Z$ on $\cal Y$. Moreover, the cofinite $E'$-action on $\wfS$ induces a cofinite $E'/Z$-action on $\ofS$, so $(\cal Y, \ofS)$ is a HHG structure for $E'/Z$.
    \end{itemize}

    \par\medskip\textbf{Step 5: conclusion} As argued in the Interlude above, to conclude the proof of Theorem~\ref{thm:centralquot_is_HHS} we are left to show that the collection $\{[z_2],\ldots, [z_n]\}\subset E'/K$ is clean in $\ofS$. By how orthogonality and nesting are defined in $\ofS$, it is in turn enough to prove that $\Bigdom{[z_i]}\subseteq [\cal U]$, where $[\cal U]=\{[U]\}_{U\in \cal U-\{U_z\}}$. Indeed, if $[W]\in \ofS$ is unbounded, and $\diam_{[W]}(\langle [z_i]\rangle \cdot [x_0])=\infty$ for some $x_0\in \cal X$, then $\diam_{W}(\langle z_i\rangle \cdot x_0)=\infty$ as well, since the projection $q_W\colon \C W\to \C [W]$ is a quasi-isometry. Hence $W\in \Bigdom{z_i}-\{U_z\}$, so $[W]\in [\cal U]$, as required.
\end{proof}

\begin{rem}
\label{re-mark}
    In the statement of Theorem~\ref{thm:centralquot_is_HHS}, one cannot hope that $G$ is genuinely a HHG. Indeed, the $(3,3,3)$ triangle group $G$ is not a HHG \cite[Corollary 4.5]{PS_unbounded}; however the direct product $G\times \Z$ is a HHG with clean containers, as it acts geometrically on the standard cubulation of $\R^3$. We provide two arguments.
    
    The first one, more conceptual, was suggested to us by an anonymous referee. The equilateral triangle tiling of the plane, on which $G$ acts geometrically, is exactly the $A_2$ root lattice, which can be constructed as the set of integer vertices in the plane orthogonal to $(1,1,1)$ in $\R^3$. The translation along $(1,1,1)$ commutes with the natural action of $G$, giving an action of $G\times \Z$, which can be shown to be geometric.
      
    The second argument is more explicit and was suggested to us by Mark Hagen.  Let $o = (0,0,0)$ and $p = (1,1,1)$. Let $H_1$ be the plane $\{x=y\}$, and similarly define $H_2=\{y=z\}$ and $H_3=\{x=z\}$. Let $T_1, T_2, T_3$ be the reflections across $H_1, H_2, H_3$, which preserve the standard cubulation of $\R^3$. Now let $F=\{x+y+z=0\}$, which is the plane orthogonal to $op$ passing through $o$, and let $L(x,y,z)=(x+1,y+1,z-2)$, which is an integer translation along the line in which $H_1$ intersects $F$. Notice that $T_1,T_2,T_3$ preserve $F$ as they all fix $op$, and moreover so does $L$. Then let $G = \langle \alpha,\beta,\gamma\rangle$, where
    \begin{itemize}
    \item $\alpha=T_1\colon (x,y,z)\mapsto (y,x,z)$,
    \item $\beta=T_2\colon (x,y,z)\mapsto(x,z,y)$,
    \item $\gamma=LT_3L^{-1}\colon(x,y,z)\mapsto (z-3,y,x+3)$.
\end{itemize}
$G$ is a quotient of the $(3,3,3)$ triangle group $H$, since the above elements are all reflections and the product of any two of them has order $3$ (geometrically, this is because the dihedral angle between any two of the planes is $\pi/3$). Also $\alpha\circ\beta\circ\gamma$ is given by $(x, y, z) \mapsto (x+3, z-3, y)$, which has infinite order (as one sees by looking at the first coordinate). However, $H$ is \emph{just-infinite}, meaning that whenever $H$ surjects onto an infinite group $G$ then $H\cong G$. Indeed, this follows from \cite[Proposition 9]{McCarthy_just_infinite}, together with the fact that the point group is $S_3$, realised as an irreducible subgroup of $\mathrm{GL}_2(\mathbb{Z})$. This shows that $G$ is isomorphic to the $(3,3,3)$ triangle group.

Finally, let $S(x,y,z)=(x+1,y+1,z+1)$, which commutes with $L$ and with every $T_i$ as $op\subset H_1\cap H_2\cap H_3$. Moreover $\langle S\rangle \cap G$ is trivial, since $G$ fixes $F$. Hence $E=\langle S,G\rangle\cong \langle S\rangle \times G$ is a direct product. Notice that $E$ preserves the standard cubulation of $\R^3$, as its generators do, and the action is proper and cocompact (as one can see by considering the plane $F$ and its translates by powers of $S$). By e.g. \cite[Remark 13.2]{HHS_I}, $E$ is a HHG, whose domain set consists of:
\begin{itemize}
    \item the top element $S$;
    \item three elements for the coordinate planes, which are pairwise transverse;
    \item three elements for the coordinate lines, each of which is nested in the two planes it belongs and orthogonal to the third one.
\end{itemize} 
In particular this structure has clean containers, as required.
\end{rem}

\section{Quotients of mapping class groups} \label{sec:quot_MCG}
\subsection{The general problem}
We now discuss certain quotient extensions that it would be interesting to understand. The context is quotients of mapping class groups, and in particular the conjecture of the second and third author \cite[Question 3]{MS_rigidity_MCG}, which we now state in a simplified form.

Let $\calG(S)$ be the mapping class group of a finite-type surface $S$, and let $\cal{B}=\{\phi_1,\dots,\phi_k\}$ be a collection of elements. The conjecture predicts that there exists $M\in \mathbb{N}_{>0}$ such that $\calG(S)/\N$ is hierarchically hyperbolic, where $$\N=\ll \phi_1^{M},\ldots, \phi_k^M \rr.
$$

There are various cases known in the literature. For instance, \cite[Theorem 7.1]{BHMS} provides an affirmative answer if $\cal B$ consists of conjugacy representatives of all Dehn twists; \cite{short_HHG:II} almost completely settles the conjecture for the five-punctured sphere; and \cite{random_quot} proves hierarchical hyperbolicity of quotients by random walks, which should be thought as the ``typical'' quotients (see also \cite{hhs_asdim} for similar classes of quotients, including some of those considered in \cite{claymangahas}). The simplest unknown case is where $\cal B$ consists of a single Dehn twist, and even in that case one already encounters the problem of showing that a certain quotient central extension is bounded.
 
More precisely, if $p\colon \calG(S)\to E$ is the quotient under consideration, let $H\le E$ be the image under $p$ of the stabiliser of a curve $\gamma$, with the property that no power of the Dehn twist around $\gamma$ is conjugate into an element of $\cal B$. In any reasonable HHG structure, $H$ should be itself hierarchically hyperbolic, as it would correspond to a \emph{standard product region} (see e.g. \cite[Definition 5.15]{HHS_II}). One of the simplest scenarios is where $\gamma$ is non-separating, so that its stabiliser is the mapping class group of $S-\gamma$. Hence, a simplified, yet significant version of the problem is the following. 

\begin{question}\label{quest:mcg/dt_interior}
Let $U$ be a finite-type surface with one boundary component $\gamma$, let $\h U$ be the surface obtained by gluing a once-punctured disk to $\gamma$ with puncture $p$, and let $\calG(\h U,p)$ be the subgroup of the mapping class group of $\h U$ fixing $p$. By e.g. \cite[Proposition 3.19]{FarbMargalit}, there is a short exact sequence
   \[
       1\to \langle \tau_\gamma\rangle \to\calG(U)\to\calG(\h U,p)\to1.
    \]
   Let $\N_\gamma\le \calG(U)$ be the normal subgroup generated by all $M$-th powers of Dehn twists along curves which are supported on the interior of $U$.  
Is $\calG(U)/\N_\gamma$ hierarchically hyperbolic, for a suitable choice of $M$?
\end{question} 

It can be deduced from \cite{dahmani:rotating} that $\N_\gamma$ intersects the kernel trivially, so the quotient is itself a central extension 
\[
    1\to \langle \tau_\gamma\rangle \to\calG(U)/\N_\gamma\to\calG(\h U, p)/\pi(\N_\gamma)\to1.
\]
Furthermore, the aforementioned \cite[Theorem 7.1]{BHMS} yields that the base $\calG(\h U, p)/\pi(\N_\gamma)$ is itself hierarchically hyperbolic, for a suitable choice of $M$; hence Question~\ref{quest:mcg/dt_interior} is equivalent to the boundedness of the second central extension, by Theorem \ref{thm:HHG_iff_bounded_abelian_ker}.

\subsection{Quotients by pseudo-Anosovs}
Another interesting (and possibly easier) version of Question \ref{quest:mcg/dt_interior} is to ask about quotients by powers of pseudo-Anosovs rather than Dehn twists.

\begin{question}
\label{quest:quot_by_pA}
Let $U$ be a finite-type surface with one boundary component $\gamma$, and let $\h U$ be the surface obtained by gluing a once-punctured disk to $\gamma$ with puncture $p$.  Let $h$ be a pseudo-Anosov mapping class, and let $\N_h$ be the normal closure of a sufficiently high power of $h$. Is the central extension
\[
    1\to \langle \tau_\gamma\rangle \to\calG(U)/\N_h\to\calG(\h U,p)/\pi(\N_h)\to1
\]
    hierarchically hyperbolic? Equivalently, since the base is hierarchically hyperbolic by \cite[Theorem 6.2]{hhs_asdim}, is the extension bounded?
\end{question}

The first version of this paper answered Question~\ref{quest:quot_by_pA} affirmatively in the case where $U$ is a 4-punctured disk, whose mapping class group is the braid group $B_4$ on four strands. However, since then, a complete answer has been obtained by Tao \cite[Corollary 1.7]{tao2025extensiontheoremquasimorphisms}. Since our proof was quite technical, instead of giving the details we shall highlight its overall strategy and some key ideas, since those are different from Tao's and might apply in other settings.

\begin{thm}
\label{thm:braids}
Let $h$ be a pseudo-Anosov element of $B_4$, and let $\hat h$ be its image in $\calG(S_5, p)$. Then there exists $M_0\in\mathbb{N}_{>0}$ such that for all multiples $M$ of $M_0$ the following holds. Let $\N_h\unlhd B_4$ (resp. $\N\unlhd \calG(S_5, p)$) be the normal subgroup generated by $h^M$ (resp. $\hat{h}^M$), with quotient map $q$ (resp. $\h q$). Then there is a commutative diagram
$$\begin{tikzcd}
    1\ar{r}& \langle \tau_\gamma\rangle \ar{r}\ar[d,"\cong"]&B_4\ar{r}\ar[d,"q"]&\calG(S_5, p)\ar{r}\ar[d,"\h q"]&1.\\
    1\ar{r}& \mathbb Z\ar{r}&B_4/\N_h\ar{r}&\calG(S_5, p)/\N\ar{r}&1.
\end{tikzcd}$$
    where the bottom row is a bounded central extension of a HHG. In particular, $B_4/\N_h$ is hierarchically hyperbolic.
\end{thm}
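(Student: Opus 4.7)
The plan is to reduce boundedness of the bottom row of the diagram to the hyperbolic case by exhibiting a further quotient central extension whose base is hyperbolic. Once such a further quotient is in place, \cite{neumannreeves} guarantees that every $\Z$-central extension of a hyperbolic group is bounded, and Lemma~\ref{qce pullback} transports boundedness back to the extension $1\to\langle\tau_\gamma\rangle\to B_4/\N_h\to\calG(S_5,p)/\N\to 1$. Since the base $\calG(S_5,p)/\N$ is itself a HHG by the pseudo-Anosov quotient results for mapping class groups (e.g.\ \cite[Theorem 6.2]{hhs_asdim}), Theorem~\ref{thm:HHG_iff_bounded_abelian_ker} will then conclude that $B_4/\N_h$ is a HHG.

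As a preliminary, one has to verify that the bottom row is a genuine $\Z$-central extension, i.e.\ that $\N_h\cap\langle\tau_\gamma\rangle=\{1\}$. This is exactly analogous to the assertion made right after Question~\ref{quest:mcg/dt_interior}, and follows for $M$ sufficiently divisible from Dahmani's small cancellation for rotating families \cite{dahmani:rotating} applied to the action of $B_4$ on (a suitable cone-off of) its curve graph: the only elements of $\N_h$ that project trivially to $\calG(S_5,p)/\N$ are themselves trivial. Centrality of $\tau_\gamma$ in $B_4$ then ensures that the image of $\langle\tau_\gamma\rangle$ in $B_4/\N_h$ is central.

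The heart of the argument is the construction of the required hyperbolic further quotient. The natural candidate is to additionally mod out $\calG(S_5,p)/\N$ by the normal closure of sufficiently high powers of all Dehn twists along essential simple closed curves in $S_5$. Because $S_5$ has complexity two, in a reasonable HHG structure the only unbounded non-maximal coordinate spaces are quasilines coming from such Dehn twists, so killing high powers of all of them should collapse those quasilines and leave a hyperbolic group $\overline G$. This is precisely the situation handled by the short HHG framework of \cite{short_HHG:I}: imposing uniformly high rotations on the quasiline coordinate spaces of an HHG structure collapses them and produces a hyperbolic quotient. The corresponding preimage $\overline E$ in $B_4/\N_h$ is obtained by adjoining high powers of lifts to $B_4$ of the same Dehn twists, and a further application of the rotating families machinery, applied this time to the $B_4$-action, allows us to choose $M_0$ so that $\langle\tau_\gamma\rangle$ still injects into $\overline E$. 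We then have a quotient central extension $1\to\langle\tau_\gamma\rangle\to\overline E\to\overline G\to 1$, and the two steps described in the first paragraph close out the proof.

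The main obstacle is the simultaneous choice of $M_0$ controlling all of the inputs to the rotating family apparatus: one must kill $h^M$ while also adding enough Dehn twist relations to hyperbolise the base, and in both cases preserve injectivity of $\langle\tau_\gamma\rangle$ into the corresponding quotient of $B_4$. The low complexity of $S_5$ is crucial here, in that it guarantees that after collapsing the quasiline coordinate spaces one is left with a hyperbolic group, not merely a HHG. For higher-complexity surfaces this last step breaks down, which is precisely why Theorem~\ref{thm:braids} is stated for $B_4$ and why higher-genus analogues are only discussed speculatively, conditional on residual finiteness of certain hyperbolic groups (Remark~\ref{rem:enough_hyp_rf}) or via the convex-cocompact approach of Subsection~\ref{section:quotient_by_pa_in_general}.
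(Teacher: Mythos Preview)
Your proposal is correct and follows essentially the same route as the paper: kill high powers of Dehn twists to obtain a further quotient central extension with hyperbolic base, apply \cite{neumannreeves} and Lemma~\ref{qce pullback}, and conclude via Theorem~\ref{thm:HHG_iff_bounded_abelian_ker}. The paper makes the two ``injectivity of $\langle\tau_\gamma\rangle$'' steps precise via Proposition~\ref{prop:composite_proj} (a lifting lemma for composite rotating families, using the presentation of \cite[Theorem 2.2]{dahmani:rotating}) rather than a direct cone-off argument, and it spells out in three steps that $\calG(S_5,p)/\N$ is a short HHG before invoking \cite[Theorem 4.1]{short_HHG:II} to hyperbolise; but these are refinements of exactly the outline you gave.
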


\begin{proof}[Proof idea]
Firstly, one proves that $\N_h$ intersects $\langle \tau_\gamma\rangle$ trivially, justifying that there is indeed a diagram as in the statement whose left vertical arrow is an isomorphism. This can be done using the machinery of \emph{rotating families}, introduced in \cite{DGO} to model the normal closure of a pseudo-Anosov (among many other examples). 

Now, in order to show boundedness of the relevant central extension, it suffices to exhibit a further quotient central extension
$$\begin{tikzcd}
    1\ar{r}& \mathbb Z\ar{r}\ar[d,"\cong"]&B_4/\N_h\ar{r}\ar{d}&\calG(S_5, p)/\N\ar{r}\ar{d}&1.\\
    1\ar{r}& \mathbb{Z} \ar{r}&H \ar{r}&\hat H\ar{r}&1,
\end{tikzcd}$$
where $\hat H$ is hyperbolic. Indeed, the comparison map for hyperbolic groups is surjective \cite{neumannreeves} (see also \cite{mineyev}); hence, if the bottom extension exists, it is bounded, and in turn the central extension from the statement is also bounded by Lemma \ref{qce pullback}. 

One can argue that $H$ can be taken to be the quotient of $G=B_4/\N_h$ by the normal subgroup $\cal K$ generated by suitable powers of all images of Dehn twists of $B_4$. In order to prove this, one needs techniques from \cite{dahmani:rotating} to show that the intersection $\cal K\cap \Z$ is trivial, and then the machinery from \cite{short_HHG:I,short_HHG:II}, in particular \cite[Theorem 4.1]{short_HHG:II}, to prove that $G/\Z\cal K$ is hyperbolic. Note that we can apply \cite{short_HHG:I,short_HHG:II} because $\calG(S_5, p)$ is ``sufficiently low-complexity'' to be a so-called \emph{short HHG}, as introduced in \cite{short_HHG:I}.
\end{proof}

\begin{rem}\label{rem:enough_hyp_rf}
    Under the assumption that ``enough'' hyperbolic groups are residually finite, \cite{BHMS} constructs hyperbolic quotients of mapping class groups. One could in principle try to exploit these quotients, or variations, and a similar strategy as in the proof above to solve some cases of Question \ref{quest:mcg/dt_interior}, possibly conditionally on the residual finiteness of the relevant hyperbolic groups. The main difficulty is ensuring that kernels to these hyperbolic quotients, which morally arise as a union of kernels by rotating families, lift isomorphically to the corresponding central extension. 
\end{rem}

\bibliography{biblio}
\bibliographystyle{alpha}
\end{document}